\theoremstyle{plain}
\newtheorem{theorem}{Theorem}
\newtheorem{lemma}[theorem]{Lemma}
\newtheorem{procedure}[theorem]{Procedure}
\theoremstyle{definition}
\newtheorem{definition}[theorem]{Definition}
\newtheorem{example}[theorem]{Example}
\theoremstyle{remark}
\newtheorem{remark}[theorem]{Remark}
\newcommand{\KK}{\mathbb K}
\newcommand{\PP}{\mathbb P}
\newcommand{\QQ}{\mathbb Q}
\newcommand\call{\mathcal L}
\newcommand\restr[1]{\big|_{#1}}
\DeclareMathOperator{\alphahat}{\widehat{\alpha}}
\DeclareMathOperator{\Ass}{Ass}
\begin{document}

\title{Lower bounds for Waldschmidt constants of generic lines in $\mathbb{P}^3$ and a Chudnovsky-type theorem}
\author{Marcin Dumnicki, Mohammad Zaman Fashami, Justyna Szpond,\\ Halszka Tutaj-Gasi\'nska}
\date{}

\begin{abstract}
  The Waldschmidt constant $\alphahat(I)$ of a radical ideal $I$ in the coordinate ring of $\PP^N$ measures (asymptotically)
  the degree of a hypersurface passing through the set defined by $I$ in $\PP^N$. Nagata's approach to the
  14th Hilbert Problem was based on computing such constant for the set of points in $\PP^2$. Since then, these constants drew much attention,
  but still there are no methods to compute them (except for trivial cases). Therefore the research focuses on looking
  for accurate bounds for $\alphahat(I)$.

  In the paper we deal with $\alphahat(s)$, the Waldschmidt constant for $s$ very general lines in $\PP^3$. We prove
  that $\alphahat(s) \geq \lfloor\sqrt{2s-1}\rfloor$ holds for all $s$, whereas the much stronger bound
  $\alphahat(s) \geq \lfloor\sqrt{2.5 s}\rfloor$ holds for all $s$ but $s=4$, $7$ and $10$.
  We also provide an algorithm which gives even better bounds for $\alphahat(s)$, very close to the known upper bounds,
  which are conjecturally equal to $\alphahat(s)$ for $s$ large enough.
\end{abstract}
\keywords{Asymptotic Hilbert function, Chudnovsky conjecture, containment problem, symbolic powers, Waldschmidt constants}
\subjclass[2010]{14N20 \and 13F20 \and 13P10 \and 14C20}
\maketitle

\section{Introduction}

   In this note we study symbolic powers of ideals of finitely many very general lines in projective spaces. Our motivation comes from
   the general interest in asymptotic invariants of homogeneous ideals on the one hand and Chudnovsky-type questions relating
   the initial degree of an ideal to its Waldschmidt constant on the other hand. We discuss some methods leading to lower bounds on Waldschmidt
   constants of very general lines in $\PP^3$, which are reasonably close to conjecturally predicted exact values.

   Let $I\subset R=\KK[x_0,\ldots,x_N]$ be a homogeneous ideal. A celebrated result
   of Ein, Lazarsfeld and Smith \cite{ELS01} in characteristic zero
   and Hochster and Huneke \cite{HocHun02} in any characteristic asserts the containment
   \begin{equation}\label{eq:containment}
      I^{(m)}\subset I^r
   \end{equation}
   for all $m\geq rN$. Here $I^{(m)}$ denotes the $m^{th}$ symbolic power of $I$
   defined as
   $$I^{(m)}=R\cap \bigcap_{P\in\Ass(I)}I^mR_P,$$
   where the intersection is taken in the ring of fractions of $R$. In case the field $\KK$ is algebraically
   closed of characteristic $0$ and $I$ is a radical ideal from Zariski-Nagata Theorem \cite[Section 2]{DDGH} we have
   \begin{equation}\label{ZN}
   I^{(m)}=\left\{f: \frac{\partial^{|\alpha |}f}{\partial x^\alpha} \in I, |\alpha |\leq m-1\right\}.
   \end{equation}

   One of the fundamental invariants of a non-trivial homogeneous ideal $I$ is its \emph{initial degree}
   $$\alpha(I)=\min\left\{t:\; (I)_t\neq 0\right\},$$
   where $(I)_d$ denotes the degree $d$ part of $I$.
   The asymptotic version of the initial degree is the \emph{Waldschmidt constant}
   $$\alphahat(I)=\lim\limits_{m\to\infty}\frac{\alpha(I^{(m)})}{m}.$$
   It is well defined since the sequence of initial degrees of the symbolic powers
   of $I$ is sub-additive, see \cite[Lemma 2.3.1]{BocHar10JAG}.

   The containment result \eqref{eq:containment} implies the following lower bound for Waldschmidt constants of arbitrary homogeneous ideals in $N+1$ variables:
   $$\alphahat(I) \geq \frac{\alpha(I)}{N}.$$
   A better bound
   $$\alphahat(I) \geq \frac{\alpha(I)+1}{2}$$
   for ideals $I$ of points in $\PP^2$ is due to Chudnovsky \cite{Chu81}. Very recently,
   similar bounds have been proved for very general points in $\PP^N$.
   Dumnicki and Tutaj-Gasi\'nska \cite{DumTut17} and independently
   Fouli, Montero and Xie \cite{FMX18} proved that the lower bound
   $$\alphahat(I)\geq \frac{\alpha(I)+N-1}{N}$$
   holds for ideals of very
   general points in projective spaces of arbitrary dimension $N$.
   For ideals $I$ of very general points in $\PP^2$ and $\PP^3$ even better bounds for $\alphahat(I)$ are known, see \cite{DSS18} and \cite{Dum15}.

   The idea to pass from containment results for ideals of points in $\PP^N$ to higher dimensional flats
   has been exploited recently in \cite{GHV13}, see also \cite{SzeSzp17} for a survey on the containment problem.
   The article \cite{DHST14} studies asymptotic invariants of ideals supported on
   configurations of flats in the context of Nagata-type conjectures.
   The initial sequence for lines in $\PP^3$ has been studied by Janssen \cite{Jan15}. A natural line of continuing
   this approach is to study Waldschmidt constants of $s$ very general lines in $\PP^3$.
   From now on we denote these Waldschmidt constants by $\alphahat(s)$.

   In \cite[Theorem 2.5]{DHST14}, an upper
   bound for $\alphahat(s)$ 
   has been found.
   Namely, we have $\alphahat(s) \leq e_s$, where $e_s$ is the largest real root of the polynomial $\Lambda_s(t)=t^3-3st+2s$. For small values of $s$
   we have by \cite[Proposition B.2.1]{DHST14}
   \begin{center}
   \renewcommand*{\arraystretch}{1.2}
   \begin{tabular}{c|ccccccccccc}
   $s$ && 1 && 2 && 3 && 4 && 5\\
   \hline
   $\alphahat(s)$ && 1 && 2 && 2 && 8/3 && 10/3
   \end{tabular}.
   \end{center}
\medskip
   In this note we present three different approaches to bounding $\alphahat(s)$ from below.
   We provide a general bound in Theorem \ref{thm:approach1alg}. This allows us to derive a Chudnovsky-type statement for very general lines in $\PP^3$ in Theorem \ref{thm:chud}.
   Next, we show in Theorem \ref{thm:approach1} a general lower bound on $\alphahat(s)$ obtained by an elementary algorithm based
   on Theorem \ref{thm:approach1alg}. Considerably stronger results are obtained with a much more refined
   algorithm whose presentation fills Section \ref{sec:An algorithm} and culminates in Procedure \ref{proc:L}.

\section{Main results}
   Here we present our main results. The proofs fill the subsequent sections.
\begin{theorem}\label{thm:approach1alg}
Let $s$, $q$ and $k$ be positive integers satisfying
   \begin{equation}\label{eq:cond c k}
      (q-k)^2 \leq s-k^2.
   \end{equation}
Then $\alphahat(s) \geq q$.
\end{theorem}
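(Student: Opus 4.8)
The plan is to establish $\alphahat(\Gamma)\ge q$ for a carefully chosen \emph{special} configuration $\Gamma$ of $s$ lines and then pass to very general lines by semicontinuity. For the semicontinuity step, recall that a line $\ell\subset\PP^3$ is a local complete intersection, so $I_\ell^{(m)}=I_\ell^m$ and $I_\Gamma^{(m)}=\bigcap_i I_{\ell_i}^m$ for a union of lines, while $\dim (I_{\ell_i}^m)_d$ does not depend on $\ell_i$. Hence, over the family of all $s$-tuples of lines, $\dim (I_\Gamma^{(m)})_d$ is upper semicontinuous and so minimal on a dense open set; a very general configuration lies in this set simultaneously for all pairs $(m,d)$, which gives $\dim (I^{(m)}_{\mathrm{vgen}})_d\le\dim(I^{(m)}_\Gamma)_d$ and therefore $\alpha(I^{(m)}_{\mathrm{vgen}})\ge\alpha(I^{(m)}_\Gamma)$ for every $\Gamma$ and every $m$; passing to the limit yields $\alphahat(s)\ge\alphahat(\Gamma)$.

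For the configuration, first dispose of the case $q\le k$: here \eqref{eq:cond c k} gives $s\ge k^2$, and $k^2$ very general concurrent lines (padded by further general lines to reach $s$) already have Waldschmidt constant $\ge k\ge q$ by the tangent-cone estimate below. So assume $q>k$ and set $p:=q-k\ge1$, so that $k^2+p^2\le s$. I would take two very general points $P_1,P_2\in\PP^3$, choose $k^2$ very general lines through $P_1$ and $p^2$ very general lines through $P_2$, add $s-k^2-p^2$ further very general lines, and let $\Gamma$ be their union.

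Now fix $m$ and a nonzero $F\in(I_\Gamma^{(m)})_d$; I need $d\ge qm$. Working in affine coordinates centred at $P_1$, let $F_{e_1}$ be the lowest-degree form of $F$, so $e_1=\operatorname{mult}_{P_1}F$. For each of the $k^2$ lines $\ell$ through $P_1$, written locally as $\{x=y=0\}$, we have $F\in(x,y)^m$, hence $F_{e_1}\in(x,y)^m$ as well; reading $\{F_{e_1}=0\}$ as a plane curve of degree $e_1$ in $\mathbb{P}(T_{P_1}\PP^3)\cong\PP^2$, it acquires a point of multiplicity $\ge m$ at the very general point given by the direction of $\ell$. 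Thus $e_1\ge\alpha(J^{(m)})$ where $J$ is the ideal of $k^2$ very general points of $\PP^2$; since the Waldschmidt constant of a square number of very general points of $\PP^2$ equals its square root (Nagata), $\alpha(J^{(m)})\ge km$, so $\operatorname{mult}_{P_1}F\ge km$, and symmetrically $\operatorname{mult}_{P_2}F\ge pm$. With $L=\overline{P_1P_2}$, if $L\not\subset V(F)$ then $F|_L$ is a nonzero binary form of degree $d$ vanishing to order $\ge km$ at $P_1$ and $\ge pm$ at $P_2$, so $d\ge km+pm=qm$; hence, once the case $L\subset V(F)$ is settled, $\alpha(I_\Gamma^{(m)})\ge qm$ for all $m$ and $\alphahat(\Gamma)\ge q$.

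The step I expect to be the main obstacle is precisely the remaining case $L\subset V(F)$. Restricting to a general plane $H\supset L$ and dividing out the equation of $L$ in $H$ only gives $d\ge\operatorname{mult}_{P_1}F+\operatorname{mult}_{P_2}F-\operatorname{ord}_LF$, which degrades as soon as $\operatorname{ord}_LF$ grows linearly in $m$. I anticipate dealing with this either by incorporating $L$ among the $s$ lines of $\Gamma$ — so that $L$ contributes one further very general direction to the tangent cones at both $P_1$ and $P_2$, tightening the multiplicity bounds there enough to absorb the deficit — or by blowing up $L$ and performing the intersection count on the exceptional divisor, or by an induction on $s$ exploiting that otherwise every form in $(I_\Gamma^{(m)})_{qm-1}$ must vanish along $L$. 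Making one of these precise, uniformly in $m$, is where the real work lies.
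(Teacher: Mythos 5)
Your semicontinuity reduction, the disposal of the case $q\le k$, and the tangent-cone estimates $\operatorname{mult}_{P_1}F\ge km$ and $\operatorname{mult}_{P_2}F\ge (q-k)m$ (via Nagata for a square number of very general points of $\PP^2$, applied to the directions of the concurrent lines) are all sound. But the proof is incomplete exactly where you flag it, and the remaining case $L\subset V(F)$ is not a loose end --- it is the entire difficulty in your setup. Two local multiplicities alone can never force $d\ge km+(q-k)m$: the cone with vertex $P_2$ over a plane curve of degree $d$ with a point of multiplicity $a\le d$ at the image of $P_1$ is a surface of degree $d=\max(a,b)$ with $\operatorname{mult}_{P_1}=a$ and $\operatorname{mult}_{P_2}=b=d$, and it contains $L$ as a ruling. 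So any completion must genuinely use the vanishing of $F$ along the lines away from $P_1$ and $P_2$, which your argument never does; unconditionally it only yields $\alpha(I^{(m)})\ge\max(k,q-k)m$. Of the three remedies you sketch, none is carried out, and the first demonstrably fails quantitatively: adding $L$ to $\Gamma$ raises the tangent-cone counts only from $k^2$ to $k^2+1$ points (and $(q-k)^2$ to $(q-k)^2+1$), which improves the two multiplicity bounds by at most $\bigl(\tfrac{1}{2k}+\tfrac{1}{2(q-k)}\bigr)m$ since $\sqrt{k^2+1}<k+\tfrac{1}{2k}$, while it forces $\operatorname{ord}_LF\ge m$ and the Bezout count on a general plane through $L$ loses the full $\operatorname{ord}_LF$; the net bound drops below $qm$ except in degenerate small cases.

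For comparison, the paper sidesteps this by a different specialization: $k^2$ of the lines are placed on $k$ general planes, $k$ lines per plane. If a plane $H$ is not a component of the putative divisor $D$ of degree $\le qm-1$, then $D\restr{H}$ minus the $k$ lines is a plane curve of degree $\le(q-k)m-1$ with multiplicity $m$ at the $s-k^2\ge(q-k)^2$ very general points where the remaining lines meet $H$, contradicting Nagata for a square number of points; hence every plane is a component, and subtracting all of them lowers the vanishing order along the specialized lines by one, so the argument closes by induction on $m$. The role your line $L$ plays (and which you cannot control) is played there by the planes, for which the ``is a component'' branch terminates by induction instead of getting stuck. To salvage your configuration you would need an analogous descending induction on $\operatorname{ord}_LF$, or a blow-up of $L$ with a precise intersection-theoretic bookkeeping on the exceptional divisor; as written, neither is supplied.
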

   Theorem \ref{thm:approach1alg} provides an easy algorithm to bound $\alphahat(s)$. Indeed, for a fixed $s$ there are only finitely many
   pairs of integers $k$ and $q$ satisfying \eqref{eq:cond c k}. Taking the pair with the largest $q$ does the job.
   More effectively, we obtain the following bound expressed directly in $s$.
\begin{theorem}\label{thm:approach1}
   For all $s\geq 1$ there is
   \begin{equation}\label{eq:2s-1}
      \alphahat(s) \geq \lfloor \sqrt{2s-1} \rfloor.
   \end{equation}
\end{theorem}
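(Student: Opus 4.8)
The statement follows from Theorem \ref{thm:approach1alg} once we exhibit, for each $s \ge 1$, integers $k$ and $q$ with $q = \lfloor \sqrt{2s-1} \rfloor$ satisfying the inequality $(q-k)^2 \le s - k^2$. The plan is to choose $k$ to be (roughly) half of $q$, since the left-hand side $(q-k)^2$ is minimized among integers near $k = q/2$ while the term $-k^2$ on the right is not too costly there. Concretely, I would set $q = \lfloor \sqrt{2s-1} \rfloor$ and try $k = \lfloor q/2 \rfloor$ (and perhaps compare with $k = \lceil q/2 \rceil$), and then verify the inequality $(q-k)^2 + k^2 \le s$.

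First I would record the elementary fact that for a fixed integer $q$, the quantity $f(k) = (q-k)^2 + k^2 = 2k^2 - 2qk + q^2$ attains its minimum over the integers at $k = \lfloor q/2 \rfloor$ or $k = \lceil q/2 \rceil$, with minimum value $\lceil q^2/2 \rceil$. Indeed, writing $q = 2j$ gives minimum $2j^2 = q^2/2$, and $q = 2j+1$ gives minimum $j^2 + (j+1)^2 = 2j^2 + 2j + 1 = (q^2+1)/2$. So in both cases the integer minimum of $f$ equals $\lceil q^2/2 \rceil$.

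Next I would translate the defining inequality $q \le \sqrt{2s-1}$ into a bound on $q^2$. From $q = \lfloor \sqrt{2s-1} \rfloor$ we get $q^2 \le 2s - 1$, hence $q^2 + 1 \le 2s$, hence $\lceil q^2/2 \rceil \le s$: if $q$ is even this is $q^2/2 \le s - 1/2$, so $q^2/2 \le s$ as $q^2/2$ is an integer; if $q$ is odd it is $(q^2+1)/2 \le s$, immediate from $q^2 + 1 \le 2s$. Therefore, taking $k = \lfloor q/2 \rfloor$ (which is a positive integer provided $q \ge 2$), we obtain $(q-k)^2 + k^2 = \lceil q^2/2 \rceil \le s$, i.e. $(q-k)^2 \le s - k^2$, and Theorem \ref{thm:approach1alg} yields $\alphahat(s) \ge q = \lfloor \sqrt{2s-1} \rfloor$.

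The only genuine loose ends are the small cases where the chosen $k$ fails to be a positive integer, namely $q \le 1$; these correspond to $s = 1$, where $\lfloor \sqrt{2s-1} \rfloor = 1$ and $\alphahat(1) = 1$ holds trivially (a single line has an ideal of initial degree $1$, so $\alphahat(1) = 1$). Thus the main ``obstacle'' is not really an obstacle: the heart of the argument is the one-line observation that the integer minimum of $2k^2 - 2qk + q^2$ is $\lceil q^2/2 \rceil$, combined with the defining property $q^2 \le 2s-1$ of the floor. I would double-check that $k = \lfloor q/2 \rfloor$ is legitimate for all $q \ge 2$ and handle $q \le 1$ (equivalently $s = 1$) separately as above.
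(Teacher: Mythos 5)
Your proof is correct and follows essentially the same route as the paper: both apply Theorem \ref{thm:approach1alg} with $k$ near $q/2$, the paper locating a suitable integer $k$ in the length-one interval $[(q-1)/2,(q+1)/2]$ on which $f(k)=2k^2-2qk+q^2-s\le 0$, while you explicitly take $k=\lfloor q/2\rfloor$ and compute the integer minimum $\lceil q^2/2\rceil\le s$. One tiny point: $q=1$ also occurs for $s=2$ (since $\lfloor\sqrt{3}\rfloor=1$), not only for $s=1$, but the trivial bound $\alphahat(s)\ge 1$ covers both, so nothing is lost.
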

   Working with more care, we get the following result.
\begin{theorem}\label{thm:approach2alg}
   Let $s$, $k$, $q$ be integers satisfying $qk \leq s$ and $(q-k)^2 \leq s-k$. Then $\alphahat(s) \geq q$.
\end{theorem}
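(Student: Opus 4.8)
The plan is to reduce to an inequality about a single, carefully degenerated configuration of lines. Since the sequence $\alpha(I^{(m)})$ is subadditive, $\alphahat(I)=\inf_m\alpha(I^{(m)})/m$, so $\alphahat(s)\ge q$ is equivalent to $\alpha(J^{(m)})\ge qm$ for every $m\ge1$, where $J$ is the ideal of $s$ very general lines. Moreover, for fixed $d$ and $m$ the set of $s$-tuples of lines of $\PP^3$ carrying a nonzero form of degree $d$ vanishing to order $\ge m$ along each line is closed, being the image of an incidence variety; hence if some configuration of $s$ lines carries no such form, then neither does the very general one. Thus it suffices to exhibit one configuration $Z$ with $\alpha(I_Z^{(m)})\ge qm$ for all $m$. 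I would argue by induction on $q$ (simultaneously in $s$): the base case $k=q$ reads $q^2\le s$ and is Theorem~\ref{thm:approach1alg} with $k=q$; and for $1\le k<q$ one checks, using $qk\le s$ (hence $s\ge q$) and $(q-k)^2\le s-k$, that the inductive hypothesis (or directly Theorem~\ref{thm:approach1alg} for the pair $(s-k,q-k)$) already gives $\alphahat(s-k)\ge q-k$. I would also use the standard bound $\alphahat(r\text{ very general points of }\PP^2)\ge\lfloor\sqrt r\rfloor$ for all $r$, which follows from Nagata's theorem for $\lfloor\sqrt r\rfloor^2$ general points together with monotonicity of $\alphahat$ in the number of points.

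For $Z$ I would place groups of coplanar lines into $k$ general planes $H_1,\dots,H_k$ — for instance $q$ lines in general position in each $H_j$, which is where the hypothesis $qk\le s$ is used — together with the remaining $s-qk$ lines very general in $\PP^3$; this is a degeneration of the very general configuration. Let $0\ne F$ have degree $d$ and vanish to order $\ge m$ along every line of $Z$. The main step is the non-degenerate case: if some $H_j$ is not a component of $F$, then $F|_{H_j}$ is a nonzero plane curve of degree $d$ vanishing to order $\ge m$ along the $q$ coplanar lines lying in $H_j$, hence divisible by the $m$-th power of the product of their linear forms, so $d\ge qm$. More generally, if $H_{j_1},\dots,H_{j_r}$ are the planes that \emph{are} components of $F$, with exact multiplicities $a_1,\dots,a_r$, while some $H_{j_0}$ is not, I would factor $F=L_{H_{j_1}}^{a_1}\cdots L_{H_{j_r}}^{a_r}\widetilde F$; since the $q$ coplanar lines of $H_{j_0}$ avoid $H_{j_1},\dots,H_{j_r}$, the form $\widetilde F$ still vanishes to order $\ge m$ along them, whence $\deg\widetilde F\ge qm$ and $d\ge qm$.

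The remaining case — all of $H_1,\dots,H_k$ are components of $F$ — is where I expect the genuine difficulty, and where both hypotheses must be exploited together. Writing $F=L_{H_1}^{a_1}\cdots L_{H_k}^{a_k}\widetilde F$ with $a_j$ the exact multiplicities and $\widetilde F$ divisible by no $L_{H_j}$, one has $\deg\widetilde F=d-\sum a_j$, and $\widetilde F$ vanishes to order $\ge m-a_j$ along the lines of $H_j$ and to order $\ge m$ along the $s-qk$ remaining very general lines. The idea is then to restrict $\widetilde F$ to a plane $H_j$ with $a_j$ minimal, factor out the $m-a_j$ vanishing along its coplanar lines, and bound the residual in $\PP^2$ — via the point bound above, using the points cut on $H_j$ by the very general lines, and, in the worst sub-case $a_j=m$, via the inductive hypothesis $\alphahat(s-k)\ge q-k$ applied to $\widetilde F$ and the lines not in $H_j$. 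Reconciling these estimates with $qk\le s$ and $(q-k)^2\le s-k$ — in particular fixing the number of lines per plane and handling the reduced-multiplicity contributions coming from the other planes, which really asks for a mixed-multiplicity refinement of the $\PP^2$ bound — is the bookkeeping I expect to be the main obstacle, and is presumably where one works ``with more care'' than in Theorem~\ref{thm:approach1alg}.
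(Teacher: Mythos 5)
Your reduction via semicontinuity and your treatment of the case where some plane is not a component are fine, but the proof is not complete: the case you yourself flag as ``the main obstacle'' --- all of your planes being components of $F$ --- is a genuine gap, not bookkeeping, and it stems from setting up the specialization the wrong way round. You put $q$ lines on each of $k$ planes; the paper puts $k$ lines on each of $q$ planes $H_1,\dots,H_q$ (this still uses only $qk\le s$ lines). With the paper's choice, the ``all planes are components'' case is the \emph{easy} one: subtracting $H_1+\dots+H_q$ once drops the degree bound from $qm-1$ to $q(m-1)-1$ while every line's required multiplicity drops from $m$ to $m-1$, so the situation reproduces itself with $m$ replaced by $m-1$ and one simply inducts on $m$, with no mixed multiplicities. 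With your $k$ planes, subtracting them drops the degree by only $k$ while the $s-qk$ unspecialized lines keep multiplicity $m$; the resulting system no longer has the shape of the original one, and your sketch (restrict to a plane of minimal multiplicity, invoke a ``mixed-multiplicity refinement'' of the point bound) does not close this up.

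The second thing your outline underestimates is the non-general position of the points you must control in $\PP^2$. In either specialization, when a plane $H$ is not a component, the residual trace on $H$ must vanish to order $m$ at the points where $H$ meets the lines lying in the \emph{other} planes, and those points are forced onto the lines $H\cap H_j$ --- they are not very general, so Nagata for a square number of general points (your stated tool) does not apply to them. This is exactly what the paper's Lemma~\ref{degeneration} supplies: a divisor of degree $\le (q-k)m-1$ cannot vanish to order $m$ at $s-k$ points distributed with $k$ on each of $q-1$ lines plus $s-qk$ general ones. Its proof is a two-case Bezout argument (for $q\le 2k$ directly on those $q-1$ lines; for $q>2k$ after a further specialization onto $q-k$ auxiliary lines carrying $q-k$ points each, which is precisely where $(q-k)^2\le s-k$ enters). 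Your proposal contains no substitute for this lemma, so while the skeleton is in the right spirit, both the orientation of the specialization and the key $\PP^2$ ingredient are missing.
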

   It is possible to determine the maximal $q$ satisfying conditions in Theorem \ref{thm:approach2alg}
   effectively in an algorithmic way.
   As a corollary, we obtain, with additional arguments and  partly using computer \cite{newae}, the following bound considerably improving \eqref{eq:2s-1}.
\begin{theorem}\label{thm:approach2}
   For all $s$, except $s=4,7,10$ there is
   $$\alphahat(s) \geq \lfloor \sqrt{2.5s} \rfloor.$$
\end{theorem}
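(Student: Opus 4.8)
The plan is to derive Theorem~\ref{thm:approach2} from Theorem~\ref{thm:approach2alg}: writing $q:=\lfloor\sqrt{2.5s}\rfloor$, it suffices to exhibit an integer $k$ with $qk\le s$ and $(q-k)^2\le s-k$, since Theorem~\ref{thm:approach2alg} then yields $\alphahat(s)\ge q$, which is exactly the assertion. The natural candidate is the largest $k$ compatible with the first inequality, namely $k:=\lfloor s/q\rfloor$; for this $k$ the inequality $qk\le s$ holds automatically, so the whole matter reduces to verifying $(q-k)^2\le s-k$, and since $\lfloor s/q\rfloor$ is the largest integer $\le s/q$, this is the only value of $k$ worth testing.

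Rewriting $(q-k)^2\le s-k$ as $k^2-(2q-1)k+(q^2-s)\le 0$, the inequality holds precisely for $k$ lying between the two roots $\tfrac12\bigl((2q-1)\pm\sqrt{4s-4q+1}\bigr)$. First I would dispose of the larger root: from $2.5s<(q+1)^2$ an elementary estimate shows $s/q$, hence $\lfloor s/q\rfloor$, always lies below it, so the only genuine constraint is the lower root, and the problem becomes
\[
   \Bigl\lfloor\frac{s}{q}\Bigr\rfloor \ \ge\ \frac{(2q-1)-\sqrt{4s-4q+1}}{2}
\]
(together with $\lfloor s/q\rfloor\ge 1$, which is immediate once $s\ge q$, hence for all but the smallest $s$).

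For the asymptotic range I would use $q^2\le 2.5s$ on both sides: it gives $\lfloor s/q\rfloor\ge\lfloor 2q/5\rfloor\ge\tfrac{2q}{5}-\tfrac45$, and it gives the bound $\sqrt{4s-4q+1}\ge\sqrt{1.6q^2-4q+1}$ for the radical. Substituting and simplifying reduces the displayed inequality to one of the form $c\,q\ge c'$ with explicit positive constants, which holds for all $q$ above an absolute threshold, i.e.\ for all $s\ge s_0$ for an explicit $s_0$. For the finitely many $s<s_0$ one checks the conditions of Theorem~\ref{thm:approach2alg} directly, by computer~\cite{newae}; this is where the exceptions surface, and one finds that the required $k$ exists for every $s<s_0$ other than $s=4,7,10$ (for $s=4$ the bound is in fact false, as $\alphahat(4)=8/3<3$ by the table, while for $s=7$ and $s=10$ no admissible $k$ exists and we make no claim).

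The main obstacle is the asymptotic step, because the interval of admissible $k$ between the two roots is narrow --- of length only a small constant times $q$ --- so the estimates comparing $\lfloor s/q\rfloor$ with the lower root must be kept tight, and one must check that the loss coming from the floor, together with the fact that $s$ ranges over a whole interval for each fixed $q$, does not destroy the inequality. This pushes $s_0$ up to a moderately large value and thus turns the finite verification into a genuine computation; the boundary bookkeeping ($k\ge 1$ and $k$ below the larger root) is by contrast an easy side matter.
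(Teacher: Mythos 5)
Your asymptotic step is essentially the paper's: with $q=\lfloor\sqrt{2.5s}\rfloor$ your choice $k=\lfloor s/q\rfloor$ is asymptotically the same as the paper's $k=\lfloor\sqrt{0.4s}\rfloor$ (since $s/q\approx\sqrt{0.4s}$), and the paper verifies the resulting inequality for $s\geq 490$. So far so good.

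The genuine gap is in your treatment of the finitely many small $s$. You claim that a direct computer check of the hypotheses of Theorem~\ref{thm:approach2alg} succeeds for every $s<s_0$ except $s=4,7,10$. This is false. For example, take $s=20$: then $q=\lfloor\sqrt{50}\rfloor=7$, the constraint $7k\leq 20$ forces $k\leq 2$, and $(7-k)^2\leq 20-k$ fails for $k=0,1,2$ (it would need $25\leq 18$ at best). Similarly for $s=15$ one has $q=6$, $k\leq 2$, and $(6-2)^2=16>13=s-k$. Neither $15$ nor $20$ is among the allowed exceptions, and indeed the paper's own Table~\ref{tab:compare} records that Theorem~\ref{thm:approach2alg} yields only $6$ for $s=20$ while $\lfloor\sqrt{50}\rfloor=7$. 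So Theorem~\ref{thm:approach2alg} alone cannot close the small-$s$ cases, and your finite verification would surface many more ``exceptions'' than $4,7,10$. The paper bridges exactly this gap by running the much stronger Algorithm~$L$ (Procedure~\ref{proc:L}), based on iterated degeneration onto a quadric and Cremona reductions in the plane, with $\delta=\lfloor\sqrt{2.5s}\rfloor$ for each small $s$; that computation is what isolates $4$, $7$, $10$ as the only failures. (Also note the paper's finer point: for $s=4$ the inequality is genuinely false since $\alphahat(4)=8/3<3$, whereas for $s=7,10$ it may well be true but is merely unproved.) Without invoking Algorithm~$L$ or some comparably stronger tool, your argument does not prove the theorem as stated.
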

   Lower bounds on the Waldschmidt constant combined with a simple condition count quickly lead to
   the following result generalizing classical Chudnovsky's Theorem for points in $\PP^2$.
\begin{theorem}[A Chudnovsky-type result for very general lines]\label{thm:chud}
   For all $s\geq 1$ there is
   $$\alphahat(s) \geq \frac{\alpha(s) + 1}{2}.$$
\end{theorem}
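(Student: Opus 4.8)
The plan is to combine the lower bound of Theorem \ref{thm:approach1alg} with an elementary dimension count that forces $s$ to grow with $\alpha(s)$. Throughout I write $\alpha=\alpha(s)$ and let $I\subset R$ be the ideal of the $s$ very general lines, so that $\alpha\ge 1$ and $(I)_{\alpha-1}=0$.

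\emph{Step 1: the dimension count.} The restriction of a form of degree $\alpha-1$ to a line is a binary form of degree $\alpha-1$, which spans an $\alpha$-dimensional space; hence vanishing along a single line imposes at most $\alpha$ linear conditions on $(R)_{\alpha-1}$, and vanishing along all $s$ lines at most $s\alpha$ conditions. The solution space of this linear system is exactly $(I)_{\alpha-1}$, so it has dimension at least $\dim(R)_{\alpha-1}-s\alpha$; since $(I)_{\alpha-1}=0$ this forces
\begin{equation}\label{eq:chud-count}
   s\alpha\ \ge\ \dim(R)_{\alpha-1}\ =\ \binom{\alpha+2}{3},\qquad\text{equivalently}\qquad s\ \ge\ \frac{(\alpha+1)(\alpha+2)}{6}.
\end{equation}
This is a bare parameter count: no general-position input (such as the Hartshorne--Hirschowitz theorem on postulation of generic lines) is needed.

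\emph{Step 2: feeding this into Theorem \ref{thm:approach1alg}.} Set $q=\lceil(\alpha+1)/2\rceil$, so that $q\ge(\alpha+1)/2$ and $2q\le\alpha+2$; it is therefore enough to prove $\alphahat(s)\ge q$. Take $k=\lfloor q/2\rfloor$, with the convention $k=1$ in the degenerate case $q=1$; then $k$ is a positive integer, $k\le q$, and
$$(q-k)^2+k^2=\lceil q/2\rceil^2+\lfloor q/2\rfloor^2,$$
which equals $q^2/2$ when $q$ is even and $(q^2+1)/2$ when $q$ is odd. Using $2q\le\alpha+2$ together with \eqref{eq:chud-count} one checks that $(q-k)^2+k^2\le\frac{(\alpha+1)(\alpha+2)}{6}\le s$, i.e.\ $(q-k)^2\le s-k^2$. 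Thus condition \eqref{eq:cond c k} holds and Theorem \ref{thm:approach1alg} gives $\alphahat(s)\ge q\ge\tfrac{\alpha+1}{2}$, which is the assertion.

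\emph{Where the difficulty sits.} Everything above is routine except the comparison $(q-k)^2+k^2\le\frac{(\alpha+1)(\alpha+2)}{6}$, which has no asymptotic slack for small $\alpha$. For $q$ odd one bounds the left-hand side by $\frac{(\alpha+2)^2+4}{8}$, and clearing denominators the desired inequality becomes $\alpha^2\ge 16$, covering all $\alpha\ge 4$; for $q$ even the sharper bound $\frac{(\alpha+2)^2}{8}$ suffices and the inequality holds already for $\alpha\ge 2$; and $\alpha\in\{1,2,3\}$ are checked directly against \eqref{eq:chud-count} (for instance $\alpha=3$ gives $q=2$, $k=1$, $(q-k)^2+k^2=2\le\tfrac{4\cdot 5}{6}$, and $\alpha=1$ gives $q=k=1$, $(q-k)^2+k^2=1\le\tfrac{2\cdot 3}{6}$). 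So the only real obstacle is this bookkeeping with integer parts and residues modulo $4$; the content is simply that \eqref{eq:chud-count} delivers $s\gtrsim\alpha^2/6$ while Theorem \ref{thm:approach1alg} consumes only about $\alpha^2/8$, leaving the needed margin.
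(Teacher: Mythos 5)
Your argument is correct. Step 1 is exactly the paper's condition count \eqref{eqal}, giving $(\alpha+2)(\alpha+1)\le 6s$. Where you diverge is in how this is converted into the bound on $\alphahat(s)$: the paper routes through Theorem \ref{thm:approach1}, i.e.\ through $\alphahat(s)\ge\lfloor\sqrt{2s-1}\rfloor$, combined with Lemma \ref{assq}; because of the losses from the floor function this only works for $\alpha(s)\ge 10$ (equivalently $s\ge 22$), and the paper then has to verify $s=1,3,\dots,21$ case by case and to patch $s=2$ separately with a direct appeal to Theorem \ref{thm:approach1alg} (since $\lfloor\sqrt{3}\rfloor=1<2$). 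You instead apply Theorem \ref{thm:approach1alg} directly, choosing $q=\lceil(\alpha+1)/2\rceil$ and $k=\lfloor q/2\rfloor$ as functions of $\alpha$ rather than of $s$; I checked your parity computation ($\lceil q/2\rceil^2+\lfloor q/2\rfloor^2$ equals $q^2/2$ or $(q^2+1)/2$) and the resulting inequalities ($\alpha\ge2$ for $q$ even, $\alpha^2\ge16$ for $q$ odd), and the only genuinely uncovered case is $\alpha=1$, where $q=k=1$ works trivially. Your route buys a near-uniform argument with essentially no exceptional cases and no hidden computer or tabular verification, at the cost of a small amount of parity bookkeeping; the paper's route is conceptually simpler to state (it quotes an already-packaged closed-form bound) but pays for it with the finite check for $s\le 21$. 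Both proofs ultimately rest on the same engine, Theorem \ref{thm:approach1alg}, so there is no new geometric input in either.
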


We also provide an algorithm $L$, which gives even better bounds for $\alphahat(s)$. This algorithm runs for each $s$ separately.
It seems not feasible
to write a closed formula for the output of the algorithm. However, we compare in Table \ref{tab:compare} the bounds resulting from various approaches.
\renewcommand*{\arraystretch}{1.15}
\begin{table}[h]\label{tab:compare}
$$\begin{array}{c|cccccccc}
   s                                         &   10   &   20   &   50   &   100   &   200   &    300    &  400     &     500  \\ \hline
\text{Theorem \ref{thm:chud}}                &   3.5  &   6    &    8   &    12   &    17   &   20.5    &   24     &      27  \\
\text{Theorem \ref{thm:approach1}}           &    4   &   6    &    9   &    14   &    19   &    24     &   28     &      31  \\   
\text{Theorem \ref{thm:approach1alg}}        &    4   &   6    &   10   &    14   &    20   &    24     &   28     &      31  \\
\text{Theorem \ref{thm:approach2alg}}        &    4   &   6    &   10   &    15   &    22   &    27     &   31     &      35  \\
\text{algorithm $L$}                         & 4.807  & 7.072  & 11.570 & 16.636  &   23.8  &  29.301   &  33.938  &   38.022 \\
\text{expected value }\;  e_s                & 5.107  & 7.388  & 11.899 & 16.977  & 24.154  &  29.660   &  34.302  &   38.392 \\
\end{array}$$
\caption{}
\end{table}

\section{The method}
   Our approach builds upon the upper semi-continuity of the dimension of cohomology groups.
   More precisely, in order to provide a lower bound on the Waldschmidt constant
   of a union of very general flats one needs to show that certain linear systems
   with prescribed vanishing order along the flats are empty, or actually stably empty,
   see Definition \ref{def:stably empty semi-eff}. As this is difficult to show
   for flats in a very general position directly, we specialize them, to a favorably position where one or other
   kind of induction process can be used. If the systems with flats in a special position are empty,
   then the same holds true for systems with flats in a very general position, this is exactly the
   yoga of the semi-continuity. See \cite{FMX18} for a very nice and precise discussion
   of this idea.

\section{Waldschmidt constants for lines -- the first approach}
   In this section we prove Theorems \ref{thm:approach1alg} and \ref{thm:approach1}.
\subsection{Proof of Theorem \ref{thm:approach1alg}}
   We assume to the contrary that there exists a divisor $D$ of degree $d$, with multiplicities at least $m$ along all $s$ lines such that
   $d/m < q$, cf. (\ref{ZN}). Then $d\leq qm-1$.
   We specialize $k^2$ out of $s$ very general lines onto $k$ general planes, $k$ lines on each of $k$ planes.

   Let $H$ be one of the fixed planes. If $H$ is not a component of $D$, then the restriction of $D$ to $H$ vanishes to order $m$
   along the $k$ lines in $H$. Subtracting these lines from $D\restr{H}$ we obtain
   a curve of degree $d-km\leq (q-k)m-1$ which passes through $s-k^2\geq (q-k)^2$ very general points
   with multiplicity $m$. Since the Nagata Conjecture holds for the square number of points (here $(q-k)^2$ points),
   this is a contradiction, cf. \cite[Remark 2.6]{DHST14}.

Hence, all distinguished planes are components of $D$. Subtracting them from $D$ we obtain a divisor of degree $d-k$
vanishing along each of specialized lines to order $m-1$. This divisor restricted to $H$ after removing its line components
has degree $d-km = (d-k) - k(m-1)$. Additionally it
has multiplicity $m$ at each of $s-k^2$ very general points in $H$. Hence $H$ must be again its component. Continuing in this
way we obtain a contradiction with the existence of $D$.

\subsection{Proof of Theorem \ref{thm:approach1}}
   Let $s\geq 1$ be fixed and let $q=\lfloor \sqrt{2s-1} \rfloor$. We claim that there exists an integer $k$ satisfying
$$(q-k)^2 \leq s-k^2.$$
   Indeed, the quadratic function
$$f(k)=2k^2-2qk+q^2-s$$
   attains its minimum at $k_0=q/2$. Since $q^2\leq 2s-1$, we have $f(q/2+1/2) \leq 0$. Thus
   $f$ is non-positive on an interval of length at least $1$ (from $(q-1)/2$ to $(q+1)/2$).
   Hence there exists in this interval an integer $k$ such that $f(k) \leq 0$. The assertion then follows from Theorem \ref{thm:approach1alg}.

\section{Waldschmidt constants for lines -- the second approach}
   We begin with a preparatory statement dealing with divisors in $\PP^2$.
\begin{lemma}\label{degeneration}
    Let $s$, $k$ and $q>k$ be nonnegative
     integers satisfying $(q-k)^2 \leq s-k$ and $qk\leq s$.
    Consider $q-1$ very general lines $L_{1},\dots,L_{q-1}$ in $\PP^2$, each containing $k$ distinguished very general points and $s-qk$ additional
    very general points on $\PP^2$, so that there are altogether $s-k$ distinguished points. Let $\Gamma$ be a divisor vanishing
    to order at least $m$ at all these points. Then
    $$\deg(\Gamma)\geq (q-k)m.$$
\end{lemma}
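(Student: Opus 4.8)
The plan is to argue by contradiction and reduce, step by step, to a statement about plane curves through generic points, where the Nagata Conjecture (known for a square number of points, cf.\ \cite[Remark 2.6]{DHST14}) forces a contradiction. So suppose $\Gamma\subset\PP^2$ vanishes to order at least $m$ at all $s-k$ distinguished points but $\deg(\Gamma)=d<(q-k)m$, i.e.\ $d\le (q-k)m-1$. The aim is to show that one of the $q-1$ lines $L_j$ must be a component of $\Gamma$; then, peeling it off, we obtain a new divisor of degree $d-1$ which (after accounting for the multiplicity drop along that line) still vanishes to high order at the remaining distinguished points, and we can iterate.

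The key computation is the restriction of $\Gamma$ to a fixed line $L=L_j$. If $L$ is not a component of $\Gamma$, then $\Gamma\cap L$ is a zero-dimensional scheme of length $d$ on $L\cong\PP^1$, and it contains the $k$ distinguished points lying on $L$, each with multiplicity $m$; this needs $km\le d$. Combined with $d\le (q-k)m-1$, this forces $km\le (q-k)m-1$, i.e.\ $(2k-q)m\le -1$, which is a contradiction whenever $q\le 2k$; but in general $q>k$ only gives $q-k\ge 1$, so a naive length count on a single line is not enough. Instead, the right move is to subtract \emph{all} the points off $\Gamma$ via the lines simultaneously: restrict $\Gamma$ to a fixed $L_j$, remove the $k$ base points of multiplicity $m$, obtaining degree $d-km$ on $L_j$ — and here the condition $qk\le s$, which guarantees $s-qk\ge 0$ additional free points, together with $(q-k)^2\le s-k$, is what makes the bookkeeping close. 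The cleanest route mirrors the proof of Theorem \ref{thm:approach1alg}: show that if no $L_j$ is a component then subtracting the $k$ lines' worth of multiplicity from a generic line restriction still leaves a curve through too many generic points of multiplicity $m$ relative to its degree, contradicting Nagata for $(q-k)^2$ points.

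Concretely, the induction I would run is on $m$ (or on $\deg\Gamma$). Assume $L_1,\dots,L_{q-1}$ are all components of $\Gamma$ (if not, the Nagata-type argument above finishes immediately). Write $\Gamma=L_1+\cdots+L_{q-1}+\Gamma'$, where $\deg(\Gamma')=d-(q-1)$. At each distinguished point, exactly one line passes (the points on distinct $L_j$ are distinct and the $s-qk$ extra points lie on no $L_j$ by genericity), so $\Gamma'$ vanishes to order $\ge m-1$ at the $k(q-1)$ points on the lines and still to order $\ge m$ at the $s-qk$ other points. The subtle point — and the main obstacle — is to package these two different multiplicity requirements so that the inductive hypothesis of Lemma \ref{degeneration} applies with $m$ replaced by $m-1$: one wants $\deg(\Gamma')\ge (q-k)(m-1)$, i.e.\ $d-(q-1)\ge (q-k)(m-1)$, which rearranges to $d\ge (q-k)m-(q-k)+(q-1)=(q-k)m+(k-1)\ge (q-k)m$, contradicting $d<(q-k)m$. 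The delicate verification is that the configuration for $\Gamma'$ still satisfies the hypotheses of the lemma (the same $q-1$ lines, each now carrying $k$ points of multiplicity $m-1$, plus the $s-qk$ points of multiplicity $m$ treated as living on, say, a generic auxiliary line, with the inequalities $(q-k)^2\le s-k$ and $qk\le s$ preserved); here one must be careful about whether the "extra" points should be redistributed onto the lines or handled separately, and about the base case $m=0$ or $m=1$ where the statement is vacuous or immediate.

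The only genuinely non-formal ingredient is the appeal to the Nagata Conjecture for a perfect square number of points: a plane curve of degree $\delta$ through $n^2$ very general points each of multiplicity $m$ must have $\delta\ge nm$. This is exactly what rules out the base of the recursion, and it is the same input used in Theorem \ref{thm:approach1alg}. I expect the write-up to be short once the double-multiplicity bookkeeping in the inductive step is set up correctly; that bookkeeping — making sure every distinguished point is charged to exactly one line and that the two hypothesis inequalities survive the passage $m\mapsto m-1$ — is where the care is needed.
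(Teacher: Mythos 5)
Your first branch --- when $q\le 2k$ --- is essentially the paper's Case 1: intersecting $\Gamma$ with a line $L_i$ that is not a component gives $km\le\deg\Gamma\le(q-k)m-1\le km-1$, so all $L_i$ are components; one subtracts all of $L_1+\dots+L_{q-1}$ at once, and the mixed-multiplicity worry you raise disappears if you simply weaken \emph{every} multiplicity (including at the $s-qk$ free points) to $m-1$: the residual divisor has degree at most $(q-k)m-1-(q-1)\le(q-k)(m-1)-1$ (using $k\ge1$, automatic here), so iterating shows $\Gamma\supseteq m(L_1+\dots+L_{q-1})$, whose degree $m(q-1)\ge m(q-k)$ already exceeds $\deg\Gamma$. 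No appeal to the inductive hypothesis with two different multiplicities is needed.

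The genuine gap is your treatment of the complementary case $q>2k$, where you fall back on ``Nagata for $(q-k)^2$ points.'' This cannot work as stated: of the $s-k$ distinguished points, $k(q-1)$ are constrained to lie on the lines $L_1,\dots,L_{q-1}$, so they are \emph{not} very general points of $\PP^2$, and semicontinuity runs the wrong way --- emptiness of $\call_2(d;m^{\times n})$ for very general points says nothing about points in special position, where the system can only grow. (This is not a removable technicality: when Lemma \ref{degeneration} is applied in Theorem \ref{empty}, the points really are intersection points of $H_1$ with lines lying in the other planes, hence forced onto the lines $H_1\cap H_j$.) Moreover the step you describe as ``subtracting the $k$ lines' worth of multiplicity from a generic line restriction'' is a pattern borrowed from the proof of Theorem \ref{thm:approach1alg}, where one restricts a surface in $\PP^3$ to a plane; here $\Gamma$ already lives in $\PP^2$ and its restriction to $L_j$ is a divisor on $\PP^1$, so there are no lines to subtract from it. The paper's actual device for $q>2k$ is different: introduce $q-k$ auxiliary very general lines $M_1,\dots,M_{q-k}$ and specialize the $s-k$ points so that each $M_j$ carries exactly $q-k$ of them (the points on $L_i$ move to intersections $L_i\cap M_j$, which is possible precisely because $k<q-k$, and the count $(q-k)^2\le s-k$ supplies enough points). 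Then Bezout on each $M_j$ gives $(q-k)m\le\deg\Gamma\le(q-k)m-1$ unless $M_j\subset\Gamma$, so one subtracts $M_1+\dots+M_{q-k}$ and iterates exactly as in Case 1, forcing $\Gamma\supseteq m(M_1+\dots+M_{q-k})$, a contradiction. Without this auxiliary-line specialization (or some substitute valid for points on lines), your argument does not close.
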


\begin{proof}
   Assume that there exists a divisor $\Gamma$ with $\deg(\Gamma)\leq (q-k)m-1$. The proof splits in two cases, depending on the applicability of Bezout's Theorem.

\textbf{Case $q\leq 2k$}.
   If $L_i$ is not a component of $\Gamma$, then
   $$km-1\geq (q-k)m-1\geq (\Gamma.L_i)\geq km,$$
   a contradiction. Hence all the lines $L_1,\dots,L_{q-1}$ are
   components of $\Gamma$ by Bezout's Theorem.
   The divisor $\Gamma-L_1-\ldots-L_{q-1}$ has degree at most $(q-k)m-1-(q-1)\leq (q-k)(m-1)-1$ and has the multiplicity at least $m-1$ in each of the points.
   Repeating the argument with $m$ replaced by $m-1$, we conclude that $\Gamma$ contains $m(L_1+\ldots+L_{q-1})$. This is a contradiction.

\textbf{Case $q>2k$}.
   We take additional very general lines $M_1,\dots,M_{q-k}$ in $\PP^2$. In particular they do not pass through any intersection point
   $L_i\cap L_j$ for $1\leq i<j\leq q-1$. Now, we specialize distinguished points on lines $L_1,\dots,L_{q-1}$, so that they become
   intersection points between the lines $L_i$ and $M_j$ and also the remaining points get specialized on lines $M_j$.
   It can be arranged so that there are altogether $q-k$ points on each $M_j$. This is possible, since we may specialize any point on $L_i$
   to arbitrary $M_j$ (it is important in this case that the number of points $k$ we want to specialize is smaller
   than the total number of intersections of $L_i$ with $M_1,\dots,M_{q-k}$, which is equal to $q-k$).
   In this construction we need altogether at least $(q-k)^2$ points and this number of points
   is guaranteed by the assumptions.

   Intersecting each of the lines $M_j$ with $\Gamma$, we see by Bezout's Theorem that now these lines must be components of $\Gamma$.
   Subtracting their union from $\Gamma$ results in a divisor with degree $q-k$ less than the degree of $\Gamma$ and multiplicities
   at all points at least $m-1$. It follows as before, that $\Gamma$ contains $m(M_1+\ldots+M_{q-k})$ which is not possible.
\end{proof}

\begin{theorem}\label{empty}
   Let $I$ be the ideal of $s$ very general lines in $\PP^3$. Let $m$ and $q$ be some fixed positive
   integers and assume that there is an integer $k$ such that
   $qk \leq s$ and $(q-k)^2 \leq s-k$. Then $\alpha(I^{(m)})\geq qm$.
\end{theorem}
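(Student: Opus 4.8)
The plan is to argue by contradiction, to degenerate the $s$ very general lines onto $q$ auxiliary planes so that the trace of a hypothetical low-degree divisor on one of these planes is governed by Lemma~\ref{degeneration}, and then to run a descending induction producing an effective divisor of negative degree.

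Assume $\alpha(I^{(m)})\le qm-1$. By the Zariski--Nagata description \eqref{ZN} there is an effective divisor $D\subset\PP^3$ of degree $d\le qm-1$ with multiplicity at least $m$ along each of the $s$ lines. We may assume $0\le k<q$: if $k<0$ the hypotheses force $q^2\le s$, so $k=0$ works as well; if $k\ge q$ then $q(q-1)\le q^2\le qk\le s$ and $s\ge q$, so $k=q-1$ works instead; we replace $k$ accordingly. Choose $q$ very general planes $H_1,\dots,H_q\subset\PP^3$ and degenerate $qk$ of the lines so that exactly $k$ of them come to lie in each $H_j$, leaving the remaining $s-qk\ge0$ lines very general; one arranges in addition that, for each $j$ and each $a\ne j$, the $k$ lines lying in $H_j$ meet the very general line $H_a\cap H_j$ at $k$ very general points of it, and that all resulting points are distinct and avoid the distinguished lines. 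Since such divisors exist for very general lines, by semicontinuity of $h^0$ they exist for this degenerate configuration too, so it suffices to contradict their existence there.

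The heart is the following claim, proved by induction on $i\ge0$: the divisor $D^{(i)}:=D-i(H_1+\dots+H_q)$ is effective of degree $d-iq$, with multiplicity $\ge m-i$ along the $qk$ lines lying in the planes and $\ge m$ along the other $s-qk$ lines. Since effectivity forces $d-iq\ge0$, one always has $i<m$, hence $m-i\ge1$. For the inductive step, suppose $H_1$ is not a component of $D^{(i)}$ and set $C:=D^{(i)}\restr{H_1}$, a plane curve of degree $d-iq$; the $k$ lines of the configuration lying in $H_1$ are components of $C$ of multiplicity $\ge m-i$, and deleting them leaves an effective curve $\Gamma$ of degree $(d-iq)-(m-i)k$. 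Now $\Gamma$ vanishes to order $\ge m-i$ at the $s-k$ points $M\cap H_1$, where $M$ runs over the lines not lying in $H_1$: the $(q-1)k$ points coming from lines inside $H_2,\dots,H_q$ sit $k$ by $k$ on the very general lines $H_1\cap H_j$ ($j=2,\dots,q$), and the remaining $s-qk$ points are very general in $H_1$. This is precisely the configuration of Lemma~\ref{degeneration} for the triple $(s,k,q)$ (whose hypotheses $qk\le s$, $(q-k)^2\le s-k$ and $q>k$ are in force), applied with $m$ replaced by $m-i$; hence $\deg\Gamma\ge(q-k)(m-i)$, and therefore $(d-iq)-(m-i)k\ge(q-k)(m-i)$, which simplifies to $d\ge qm$, a contradiction. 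Consequently $H_1$, and by the symmetry of the construction each $H_j$, is a component of $D^{(i)}$, so $D^{(i+1)}=D^{(i)}-(H_1+\dots+H_q)$ is effective with the asserted data --- the multiplicity along a line lying in some $H_j$ drops by exactly one, as that line lies in a single $H_j$. But then $D^{(i)}$ is effective for every $i$, contradicting $\deg D^{(i)}=d-iq\to-\infty$; hence $\alpha(I^{(m)})\ge qm$.

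I expect the main difficulty to be the case where $H_1$ is a component of $D$, handled by the descending induction above: after $i$ peelings the $(q-1)k$ points on the lines $H_1\cap H_j$ only retain multiplicity $m-i$, so Lemma~\ref{degeneration} must be invoked with the shifted parameter $m-i$, and one checks the small miracle that the resulting inequality $(d-iq)-(m-i)k\ge(q-k)(m-i)$ is equivalent to the single $i$-free statement $d\ge qm$. The remaining ingredients --- the semicontinuity reduction, the fact that the multiplicity along a curve does not drop under restriction to a plane containing it, and the general position of the specialised points (which rests on choosing the $k$ lines inside distinct $H_j$ independently, so that each individual plane's configuration is very general even though configurations on different planes are correlated) --- are routine.
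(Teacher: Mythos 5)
Your proposal is correct and follows essentially the same route as the paper's proof: specialize $k$ lines onto each of $q$ general planes, restrict a hypothetical low-degree divisor to one plane, invoke Lemma~\ref{degeneration} to force each plane to be a component, and peel off $H_1+\dots+H_q$ repeatedly until the degree becomes negative. Your version is slightly more careful in the bookkeeping (tracking the multiplicities $m-i$ versus $m$ separately and normalizing $k$ into the range $0\le k<q$ required by Lemma~\ref{degeneration}), but the argument is the same.
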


\begin{proof}
   It suffices to show that there is no divisor $D$ of degree $\leq qm-1$ vanishing to order $m$
   along \emph{some} $s$ lines. Let $H_1,\ldots,H_q$
   be general planes in $\PP^3$. We specialize $k$ lines onto each of these planes. We assume,
   to the contrary that in this situation a divisor $D$ as above exists.

   Assume furthermore that $H_1$ is not a component of $D$. Then the trace of $D$ on $H_1$
   is a divisor vanishing with multiplicity $m$ along each line in $H_1$. Subtracting these
   lines from $D\restr{H_1}$ we get a divisor $\Gamma$ of degree $\leq (q-k)m-1$
   vanishing to order $m$ at intersection points of $H_1$ with the remaining $s-k$ lines.
   Note that for example the intersection points of lines in $H_2$ with $H_1$ are general
   points on the line $H_1\cap H_2$. Lemma \ref{degeneration} implies then that $\Gamma$
   does not exist. Hence $D$ contains each of the planes $H_1,\ldots,H_q$ as a component.
   Subtracting them from $D$ we obtain a divisor of degree $\leq q(m-1)-1$ vanishing to order at least
   $m-1$ along all lines. Thus the same argument can be repeated with $m$ replaced by $m-1$.
   Proceeding by induction we show that $D$ contains $qm$ planes, a contradiction.
\end{proof}
   As an immediate Corollary we obtain Theorem \ref{thm:approach2alg}.
\section{An algorithm to bound Waldschmidt constants for lines in $\PP^3$}\label{sec:An algorithm}
   Theorem \ref{empty} opens door to an algorithmic approach to bounding
   Waldschmidt constants for lines.
   We establish first the notation.
   We write $\call_N(d;m_1,\dots,m_s)$ to denote the linear system of
   divisors of degree $d$ in $\PP^N$ with multiplicities at least $m_j$ at
   given very general points if $N=2$ or very general lines if $N=3$.
   By a slight abuse of notation, we use the same symbol with rational coefficients to
   denote $\QQ$-divisors. This does no harm since we are interested in asymptotic properties
   of considered linear systems.

   We write $\call_3(d;\overline{m_1,\dots,m_r},m_{r+1},\dots,m_s)$ to denote
   the linear system $\call_3(d;m_1,\dots,m_s)$ with the $r$ first lines specialized
   to lines in one ruling of a fixed smooth quadric $Q \subset \PP^3$. The remaining lines
   are assumed to be in a very general position. We write $m^{\times u}$ to abbreviate
   $u$ occurrences of $m$ in the tuple, for example $\call_N(6;1,1,1,2,2,3)=\call_N(6;1^{\times 3},2^{\times 2},3)$.
\begin{definition}[Stably empty and semi-effective]\label{def:stably empty semi-eff}
   We say that the system $\call_N(\delta;q_1,\dots,q_s)$, with $\delta,q_1,\dots,q_s \in \QQ$, is \emph{stably empty}
   if the linear systems $\call_N(d;mq_1,\dots,mq_s)$ are empty for all $d \leq \delta m$ and all $m$ such that $d,mq_1,\dots,mq_s$ are integers.
   We say that $\call_N(\delta;q_1,\dots,q_s)$ is \emph{semi-effective} if it is not stably empty. Finally, we say that
   $\call_N(\delta;q_1,\dots,q_s)$ is \emph{integral} if all numbers involved in the sequence are integers.
\end{definition}
\begin{remark}\label{goodm}
   The notion of semi-effective (also known as $\QQ$-effective) divisors has been introduced by Harbourne \cite[Definition 2.2.1]{recent}.
   A $\QQ$-divisor $D$ is semi-effective if there is an $m$ such that $mD$ is integral and effective.
   Both definitions are equivalent. Indeed, by assumption there exist $d$ and $k$ such that $d\leq \delta k$ and $\call_N(d;kq_1,\dots,kq_s)$ is integral and non-empty.
   Let $h$ be the denominator of $\delta$, obviously the system $\call_N(dh;khq_1,\dots,khq_s)$ is non-empty. Since $dh\leq kh\delta$,
   the system $\call_N(kh\delta;khq_1,\dots,khq_s)$ is integral and non-empty as well. So the claim holds with $m=kh$.
\end{remark}
   We have the following easy observation.
\begin{lemma}\label{Wald}
   For any rational number $\delta>\alphahat(s)$ the system $\call_3(\delta;1^{\times s})$ is semi-effective.
\end{lemma}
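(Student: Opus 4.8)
The plan is to unwind the definitions and reduce the claim to the definition of the Waldschmidt constant as a limit. First I would recall that $\alphahat(s) = \lim_{m\to\infty} \alpha(I^{(m)})/m = \inf_m \alpha(I^{(m)})/m$, where $I$ is the ideal of $s$ very general lines in $\PP^3$; the second equality holds because the sequence $\alpha(I^{(m)})$ is subadditive (Fekete's lemma), as noted in the excerpt via \cite[Lemma 2.3.1]{BocHar10JAG}. In particular, for every $m$ we have $\alpha(I^{(m)}) \geq m\,\alphahat(s)$.

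Next, fix a rational number $\delta > \alphahat(s)$. I want to produce integers $d$ and $m$ with $m\delta$ integral, $d \leq \delta m$, and $\call_3(d; m^{\times s})$ nonempty; by Remark \ref{goodm} this suffices to conclude semi-effectivity. Since $\alphahat(s)$ is the infimum of $\alpha(I^{(m)})/m$ and $\delta$ is strictly larger, there exists some $m_0$ with $\alpha(I^{(m_0)})/m_0 < \delta$, i.e. $\alpha(I^{(m_0)}) < \delta m_0$. Writing $h$ for the denominator of $\delta$ and passing to $m = h m_0$ (so that $m\delta$ is an integer and, by subadditivity, $\alpha(I^{(m)}) \leq h\,\alpha(I^{(m_0)}) < \delta m$), set $d = \alpha(I^{(m)})$. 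Then $(I^{(m)})_d \neq 0$, which by the Zariski--Nagata description \eqref{ZN} means precisely that there is a form of degree $d$ vanishing to order $m$ along each of the $s$ very general lines; that is, $\call_3(d; m^{\times s})$ is nonempty, and $d \leq \delta m$ by construction.

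The only mild subtlety — and the step I would be most careful about — is the interface between the two bookkeeping conventions: the linear-system notation $\call_3(-;-)$ counts very general lines, while $I^{(m)}$ is the symbolic power of a fixed such ideal, so one must note that "nonempty for very general lines" is exactly "$(I^{(m)})_d \neq 0$" for a very general choice, which is how $\alpha(I^{(m)})$ is defined here. Everything else is a direct translation through Definition \ref{def:stably empty semi-eff} and Remark \ref{goodm}. No genuine geometric obstacle arises; the content of the lemma is simply that $\alphahat(s)$, being a limit of degree-over-multiplicity ratios, is the threshold below which the systems $\call_3(\delta; 1^{\times s})$ become stably empty and above which they are semi-effective.
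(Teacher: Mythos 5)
Your proposal is correct and follows essentially the same route as the paper: the paper's proof simply observes that by definition of $\alphahat(s)$ there exist $d$ and $m$ with $d/m<\delta$ and $\call_3(d;m^{\times s})$ non-empty, which is exactly your argument. Your additional care about subadditivity, the denominator of $\delta$, and the Zariski--Nagata translation is harmless but not needed, since the paper's definition of semi-effective only requires $d$ and $m$ integral with $d\leq\delta m$ and the system non-empty.
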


\begin{proof}
   By the definition of the Waldschmidt constant, there exist $d$ and $m$ such that $d/m < \delta$ and the linear system $\call_3(d;m^{\times s})$ is non-empty. Therefore the claim follows.
\end{proof}

\begin{lemma}
\label{3op}
   Let $\call_2(\delta;q_1,\dots,q_s)$ be semi-effective. Then
\begin{enumerate}
\item
$\call_2(\delta;q_{\sigma(1)},\dots,q_{\sigma(s)})$ is semi-effective for any permutation $\sigma\in\Sigma_s$;
\item
For $k=\delta-q_1-q_2-q_3$, $\call_2(\delta+k;q_1+k,q_2+k,q_3+k,q_4,\dots,q_s)$ is semi-effective;
\item
If $q_1=q_2=q_3=q_4$ then $\call_2(\delta;2q_1,q_5,q_6,\dots,q_s)$ is semi-effective.
\end{enumerate}
\end{lemma}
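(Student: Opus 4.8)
\emph{Plan.} Each of the three statements asserts that a certain operation preserves semi-effectivity, and by Remark~\ref{goodm} it is enough to preserve the equivalent property that \emph{some} integral multiple of the system is effective, i.e.\ that the corresponding divisor class on the blow-up of $\PP^2$ at the relevant very general points is $\QQ$-effective. In this language (1) and (2) are ``Cremona-type symmetries'' and should be routine. For (1): permuting the multiplicities amounts, after relabelling the points by $\sigma^{-1}$, to permuting the $s$ very general base points, which yields again an $s$-tuple of very general points, so the Picard class and its $\QQ$-effectivity are unchanged. For (2): let $\tau\colon\PP^2\dashrightarrow\PP^2$ be the standard quadratic transformation centred at $P_1,P_2,P_3$. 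On $X=\mathrm{Bl}_{P_1,\dots,P_s}\PP^2$ it lifts to an isomorphism onto $\mathrm{Bl}_{P_1,P_2,P_3,\tau(P_4),\dots,\tau(P_s)}\PP^2$, again a blow-up of $\PP^2$ at $s$ very general points, hence it sends $\QQ$-effective classes to $\QQ$-effective classes; on Picard groups it acts by the classical formula sending $(\delta;q_1,q_2,q_3,q_4,\dots)$ to $(2\delta-q_1-q_2-q_3;\,\delta-q_2-q_3,\,\delta-q_1-q_3,\,\delta-q_1-q_2,\,q_4,\dots)$, which with $k=\delta-q_1-q_2-q_3$ is exactly $(\delta+k;q_1+k,q_2+k,q_3+k,q_4,\dots)$. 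Translating back via Remark~\ref{goodm} gives (2).

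Statement (3) (write $q=q_1$) is the substantial one and is of a genuinely different nature: it cannot be realised by any composition of quadratic transformations — these preserve, for instance, the intersection with the canonical class, whereas passing from $(\delta;q,q,q,q,q_5,\dots,q_s)$ to $(\delta;2q,q_5,\dots,q_s)$ changes it — and it cannot be obtained by subtracting an effective divisor either. I would therefore argue by degeneration. One may assume $\delta\ge 2q$: otherwise the pencil of conics through four general points — which has no fixed component — would, by B\'ezout, force each of its (infinitely many) smooth members to be a component of any divisor in $\call_2(m\delta;(mq)^{\times 4},\dots)$, which is impossible, so the hypothesis would be vacuous. Now, using Remark~\ref{goodm}, fix $m$ and an effective divisor $C$ of degree $m\delta$ with multiplicity $\ge mq$ at the very general points $P_1,\dots,P_4$ and $\ge mq_j$ at the very general points $P_j$. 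Keeping $P_5,\dots,P_s$ fixed, let $P_1,\dots,P_4$ move into a carefully chosen special position and pass to the flat limit $C_0$ of $C$; by upper semicontinuity of $h^0$ this $C_0$ is still effective of degree $m\delta$, has multiplicity $\ge mq$ along the limit positions of $P_1,\dots,P_4$ (together with the incidence and infinitely near conditions imposed by the special position) and multiplicity $\ge mq_j$ at $P_j$. From $C_0$ one then extracts, using the special geometry, an effective divisor of degree $m\delta$ with multiplicity $\ge 2mq$ at one general point and $\ge mq_j$ at $P_j$. Since ``$h^0\ge 1$'' is a closed condition on the configuration of points, the existence of such a divisor for one sufficiently general configuration forces $\call_2(m\delta;2mq,mq_5,\dots,mq_s)$ to be non-empty at the very general configuration, i.e.\ $\call_2(\delta;2q,q_5,\dots,q_s)$ is semi-effective.

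The main obstacle is exactly this last step: choosing the special position for $P_1,\dots,P_4$ so that the flat limit really carries a point of multiplicity $2q$, and carrying out the extraction. An unstructured degeneration does not help — bringing two of the points together yields multiplicity $q$ together with an infinitely near condition, not multiplicity $2q$, and letting the four points become collinear forces a line component but still leaves every point with multiplicity only $q$. Making a point of multiplicity $2q$ materialise (presumably by exploiting the pencil of conics through $P_1,\dots,P_4$ — so that a suitable quadratic transformation or degeneration based at that pencil converts the four multiplicity-$q$ points into one of multiplicity $2q$) is where the real work of Lemma~\ref{3op} lies, and is the step I expect to take the most effort.
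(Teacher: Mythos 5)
Your parts (1) and (2) are correct and are exactly the paper's argument: pass to an integral non-empty multiple via Remark \ref{goodm} and apply the standard Cremona transformation; your identification of the Cremona image $(2\delta-q_1-q_2-q_3;\delta-q_2-q_3,\delta-q_1-q_3,\delta-q_1-q_2,q_4,\dots)$ with $(\delta+k;q_1+k,q_2+k,q_3+k,q_4,\dots)$ is right. Your structural remarks about (3) are also sound — it is genuinely not a Cremona move (the intersection with $K$ changes by $2q$), and the correct mechanism is a collision of the four multiplicity-$q$ points into one point of multiplicity $2q$, governed by the pencil of conics through them.

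The gap is that for (3) you stop at precisely the point where the proof has to happen: you describe a degeneration whose flat limit should carry a point of multiplicity $2mq$, observe correctly that the naive specializations (two points colliding, four points becoming collinear) do not produce it, and then declare the remaining step to be "where the real work lies" without carrying it out or citing a result that does. As written, part (3) is therefore unproved. The paper closes this gap by citation rather than by a new degeneration: it invokes \cite[Theorem 1]{Dum09} (Dumnicki's reduction method; in this special case the statement is essentially Evain's collision of four fat points), whose relevant consequence is that if the auxiliary system $\call_2(2b-1;b^{\times 4})$ is empty — which it is, e.g.\ by B\'ezout applied to the pencil of conics through the four points, since $2(2b-1)<4b$ — then emptiness of $\call_2(d;2b,m_5,\dots,m_s)$ forces emptiness of $\call_2(d;b^{\times 4},m_5,\dots,m_s)$. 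Applying this contrapositively with $b=mq_1$ and $d=m\delta$ to the integral non-empty multiple furnished by Remark \ref{goodm} gives (3) immediately. So your diagnosis of the mechanism is accurate, but to complete the proof you must either prove such a collision/reduction statement or cite one; the emptiness of $\call_2(2q_1-1;q_1^{\times 4})$ is exactly the hypothesis that makes the collision of the four fat points into a single fat point of multiplicity $2q_1$ legitimate.
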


\begin{proof}
The first claim is obvious. By Remark \ref{goodm} there exists $m$ such that $\call_2(m\delta;mq_1,\dots,mq_s)$ is integral and non-empty.
A standard Cremona transformation of $\PP^2$, applied to this system, gives the non-empty system
$\call_2(m(\delta+k);m(q_1+k),m(q_2+k),m(q_3+k),mq_4,\dots,mq_s)$, hence the second claim follows.
Since $\call_2(2mq_1-1;mq_1^{\times 4})$ is empty, if $\call_2(m\delta;2mq_1,mq_5,\dots,mq_s)$ were empty,
then by \cite[Theorem 1]{Dum09} the system $\call_2(m\delta;mq_1,mq_1,mq_1,mq_1,mq_5,\dots,mq_s)$ would be empty. This gives the third claim.
\end{proof}
   We describe now the algorithm $T$. Its input is $(\delta;q_1,\dots,q_s;p)$: an $(s+1)$-tuple of rational numbers extended by an integer $p$.
   Let $q = \sum_{j=1}^{s} q_j$. With the input data we associate the system
\begin{equation}
\label{syst}
   \call_2(2\delta-q+(s-4)t;\delta-2t,\delta-q+(s-2)t,1^{\times 2p}),
\end{equation}
where $t$ is an indeterminate; we begin with this system, and, during the procedure, we will alter the entries, which are
elements in $\QQ[t]$. Now fix some small $\tau \in \QQ$, $\tau > 0$. The power of the algorithm strongly depends on choosing $\tau$. Smaller $\tau$ gives better results, but forces the algorithm to take more time.

We will use $\tau$ to order elements in $\QQ[t]$. Namely, we define that $f>g$ if $f(\tau)>g(\tau)$.
Then we perform the following procedure. In all steps we deal with a system of the form $\call_2(d(t);m_1(t),\dots,m_r(t))$.
The first term $d(t)$ will be called the degree, the others will be called multiplicities. If $m(\tau)\leq 0$ during computations,
then it is omitted in the next step.
\begin{procedure}[Algorithm $T$]\rm $ $
\begin{itemize}[leftmargin=0.5cm]
   \item Step 1. Sort multiplicities in the non-increasing order, using the ordering given above. If $m_j(\tau)\leq0$ then put $m_j = 0$.
   \item Step 2. If there are at least three non-zero multiplicities, compute $k(t)$ equal to the degree minus the sum of the three greatest multiplicities. If $k(\tau)<0$, then add $k(t)$ to the degree and to the three greatest multiplicities, as in point 2) of Lemma \ref{3op}; then go to Step 1.
   \item Step 3. Find four equal multiplicities in the sequence and replace them by twice the value of this multiplicity, as in point 3) of Lemma \ref{3op}; then go to Step 1.
\end{itemize}
   If neither Step 2 nor Step 3 can be performed, then the algorithm terminates. Observe that in each Step the degree and multiplicities are linear combinations, with integer coefficients, of the input data. Thus there exists a constant $\beta > 0$ such that if $k(\tau)<0$ then $k(\tau) \leq -\beta$. Consequently Step 2 and Step 1 cannot be performed infinitely many times, since each time (in Step 2) the evaluation at $\tau$ of three multiplicities decreases by at least $\beta$, and in Step
   1 a multiplicity is set to zero if its evaluation at $\tau$ becomes negative.

\end{procedure}
   Assume that the degree after the termination of the procedure is equal to $a+bt$ (only affine operations to the degree were performed). Then the algorithm $T$ returns
$$t_0=T(\delta;q_1,\dots,q_s;p)=\left\{ \begin{array}{lcl}
0 & \mbox{ if } & a \geq 0, \\
\min\{q_1,\dots,q_s\} & \mbox{ if } &  a < 0 \text{ and } b \leq 0, \\
\min\{-a/b,q_1,\dots,q_s\} & &\mbox{otherwise. }  
\end{array}\right.$$

The following example illustrates Algorithm $T$ for input data $(7;1,1,1,1,1;15)$.

\begin{example}
Let $\tau=1/1000$.
The associated system is $\call_2(9+t;7-2t,2+3t,1^{\times 30})$. In each line we write the system after performing Step 1
(sort and kill negative multiplicities). We also write $k(t)$ for each system to recognize if
Step 2 (for $k(\tau)<0$) or Step 3 (otherwise) is performed.
$$\begin{array}{rll}
\call_2(9+t;&7-2t,2+3t,1^{\times 30}) &  k(t)=-1\\
\call_2(8+t;&6-2t,1+3t,1^{\times 29}) &  k(t)=0\\
\call_2(8+t;&6-2t,2,1+3t,1^{\times 25}) &  k(t)=-1\\
\call_2(7+t;&5-2t,1^{\times 26},3t)  & k(t)=3t\\
\call_2(7+t;&5-2t,2,1^{\times 22},3t) &  k(t)=-1+3t\\
\call_2(6+4t;&4+t,1+3t,1^{\times 21},3t^{\times 2})  & k(t)=0\\
\call_2(6+4t;&4+t,2,1+3t,1^{\times 17},3t^{\times 2}) &  k(t)=-1\\
\call_2(5+4t;&3+t,1^{\times 18},3t^{\times 3}) &  k(t)=3t\\
\call_2(5+4t;&3+t,2,1^{\times 14},3t^{\times 3}) &  k(t)=-1+3t\\
\call_2(4+7t;&2+4t,1+3t,1^{\times 13},3t^{\times 4}) &  k(t)=0\\
\call_2(4+7t;&2+4t,2,1+3t,1^{\times 9},3t^{\times 4}) &  k(t)=-1\\
\call_2(3+7t;&1+4t,1^{\times 10},3t^{\times 5}) &  k(t)=3t\\
\call_2(3+7t;&2,1+4t,1^{\times 6},3t^{\times 5}) &  k(t)=-1+3t\\
\call_2(2+10t;&1+3t,1^{\times 5},7t,3t{\times 6})  & k(t)=-1+7t\\
\call_2(1+17t;&1^{\times 3},10t,7t^{\times 3}, 3t^{\times 6})  & k(t)=-2+17t\\
\call_2(-1+34t;&10t,7t^{\times 3},3t^{\times 6})   & k(t)=-1+10t\\
\call_2(-2+44t;&7t,3t^{\times 6})  &  k(t)=-2+31t\\
\call_2(-4+75t;&3t^{\times 4})  &  k(t)=-4+66t\\
\call_2(-8+141t;&3t) &
\end{array}$$
The output is $\frac{8}{141}$.

\end{example}

\begin{lemma}\label{tT}
   Let $(\delta;\ell_1,\dots,\ell_s;p)$ be as above and let $t_0$ be the output of algorithm $T$.
   Then \eqref{syst} is stably empty for all rational $t$ in the range $0 \leq t < t_0$.
\end{lemma}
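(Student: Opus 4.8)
The plan is to prove Lemma~\ref{tT} by tracking the effect of Algorithm~$T$ on the linear system \eqref{syst} and invoking Lemma~\ref{3op} at each step. Fix a rational $t$ with $0 \le t < t_0$. I must show the system $\call_2\bigl(2\delta-q+(s-4)t;\ \delta-2t,\ \delta-q+(s-2)t,\ 1^{\times 2p}\bigr)$ is stably empty, i.e.\ that after clearing denominators the corresponding integral system is empty. The key observation is that each operation performed by the algorithm is \emph{reversible} in the sense of Lemma~\ref{3op}: a Cremona move (Step~2) sends a semi-effective system to a semi-effective system and vice versa, and likewise the ``merge four equal multiplicities'' operation (Step~3) is an equivalence of semi-effectiveness in both directions. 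So if the \emph{output} system (the one obtained at termination) is stably empty, then so was the input system~\eqref{syst}.

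First I would make precise that the algorithm, run with the ordering induced by $\tau$, performs a finite sequence of moves, and that the intermediate ``systems'' are to be read as linear systems of $\QQ$-divisors whose entries are affine-linear polynomials in $t$; the honest statement is about the integral systems obtained after specializing $t$ to a rational value and clearing denominators (as in Remark~\ref{goodm}). I would note that for the \emph{fixed} $t$ in our range, all the sign decisions the algorithm makes at $\tau$ agree with the signs at $t$: this is exactly where the condition $t < t_0$ enters. Indeed $t_0$ is defined as (a minimum involving) $-a/b$, where $a+bt$ is the final degree, together with $\min\{\ell_j\}$; the latter guarantees that multiplicities the algorithm drops (because they are $\le 0$ at $\tau$) are genuinely $\le 0$ at $t$, so dropping them only makes the system smaller, hence preserves emptiness; and $t < -a/b$ guarantees the final degree $a+bt$ is negative at $t$. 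A linear system of plane curves of negative degree is empty, so the terminal integral system is empty, hence stably empty.

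The main steps, in order, are: (1) observe Lemma~\ref{3op} gives ``if and only if'' statements for semi-effectiveness under the two Cremona-type operations, so stable emptiness is preserved going backwards along the algorithm; (2) check that for $0 \le t < t_0$ every branch taken by the algorithm at the comparison point $\tau$ is consistent with the actual value $t$ — Step~1's ``kill negative multiplicities'' is harmless at $t$ because a multiplicity $\le 0$ can be raised to $0$ without shrinking the system, and $t < \min\{\ell_j\}$ keeps the original multiplicities positive so no spurious killing happens early; (3) conclude from $t < t_0 \le -a/b$ (when $b>0$) or from $b \le 0$ with $a<0$, or from $a \ge 0$ being excluded since then $t_0=0$ and the range is empty, that the degree $a+bt<0$ at termination; (4) a plane linear system of negative degree is empty, so the terminal system is stably empty, and unwinding the reversible operations yields that \eqref{syst} is stably empty.

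The hard part will be step~(2): verifying that running the algorithm with $\tau$-comparisons produces the same combinatorial sequence of moves that one would get reasoning directly at the specific rational value $t$, or at least that any discrepancy is in the ``safe'' direction (the $\tau$-run only ever kills a multiplicity or performs a Cremona step when doing so at $t$ would also preserve emptiness). One must rule out the bad scenario where, say, a multiplicity is positive at $t$ but the algorithm has set it to $0$ at $\tau$, thereby \emph{enlarging} the class of divisors and breaking the implication. The bound $t_0 \le \min\{\ell_1,\dots,\ell_s\}$, together with a careful bookkeeping that every multiplicity appearing during the run is a non-negative integer combination of the inputs plus a non-negative multiple of $t$ (so it stays $\ge$ its value at a smaller argument), is what closes this gap; I would isolate this monotonicity as the technical heart of the argument and then the rest is the formal ``reverse induction along the algorithm'' described above.
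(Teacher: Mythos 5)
Your argument is, up to contraposition, the same as the paper's: run Algorithm~$T$, apply Lemma~\ref{3op} at each move, and observe that for $0\le t<t_0$ the terminal degree $a+bt$ is negative, which is impossible for a semi-effective system. The paper does this in the forward direction (input semi-effective $\Rightarrow$ terminal system semi-effective $\Rightarrow$ $a+bt\ge 0$, contradiction), and that is all that is needed; your ``unwinding'' is precisely the contrapositive of the stated implication, so you do \emph{not} need Lemma~\ref{3op} to be an ``if and only if'' --- and indeed part (3) of that lemma is not an equivalence as stated, so you should drop that claim.

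Where you go astray is in what you single out as the technical heart, your step~(2). The scenario you worry about --- a multiplicity positive at $t$ being set to $0$ because it is non-positive at $\tau$ --- is harmless in the direction required: deleting a multiplicity condition only enlarges the linear system, and semi-effectiveness passes from the smaller system to the larger one (equivalently, emptiness of the larger implies emptiness of the smaller, which is the implication you are unwinding). Likewise the Cremona move of Lemma~\ref{3op}(2) is valid for any value of $k$, so it is irrelevant whether the sign of $k(t)$ agrees with that of $k(\tau)$; the $\tau$-comparisons only steer \emph{which} moves are made, never their validity. Moreover, the monotonicity you propose as the fix (every multiplicity is a non-negative combination of the inputs plus a non-negative multiple of $t$) is simply false: entries such as $\delta-2t$, and later $5-2t$, occur already in the paper's worked example. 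Finally, the $\min\{\ell_1,\dots,\ell_s\}$ term in the definition of $t_0$ plays no role in this lemma (it is needed for Lemma~\ref{t0out}); here it can only shrink the interval $[0,t_0)$ and hence only weakens the claim. With these corrections your proof collapses to exactly the paper's four-line argument.
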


\begin{proof}
   Assume that \eqref{syst} is semi-effective for some $0\leq t <t_0$.
   By Lemma \ref{3op}, the final sequence in $T$, with the first entry equal to $a+bt$, is semi-effective.
   Then it must be
   \begin{equation}\label{eq:ab}
      a+bt\geq 0,
   \end{equation}
   since the degree of a non-empty system, equal to $(a+bt)m$, must be nonnegative.
   For $t_0 = 0$ there is nothing to prove, so let $a<0$.
   If $b\leq 0$ then $a+bt \leq a < 0$, a contradiction with \eqref{eq:ab}. If $b > 0$ then $a+bt < a+bt_0 \leq 0$, again a contradiction with \eqref{eq:ab}.
\end{proof}

\begin{lemma}\label{trace}
   For $V=\call_3(d;\overline{m_1,\dots,m_s},m^{\times p})$ let
   $\mu = \sum_{j=1}^{s} m_j$.
   Assume that a quadric $Q$ is not a fixed component of $V$. Then the trace of $V$ on $Q$
   can be viewed under the standard birational map from $Q$ to $\PP^2$ as the linear system
   $$W=(2d-\mu;d,d-\mu,m^{\times 2p})$$
   on $\PP^2$. If $V$ is non-empty, so is $W$.
\end{lemma}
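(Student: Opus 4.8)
The plan is to use the standard geometry of a smooth quadric $Q \subset \PP^3$, which under the two rulings is isomorphic to $\PP^1 \times \PP^1$, and then push forward via a birational map to $\PP^2$. I would first recall that a divisor of degree $d$ in $\PP^3$ restricts on $Q$ to a curve of bidegree $(d,d)$ in $\PP^1 \times \PP^1$, provided $Q$ is not a fixed component of $V$; this is simply because $\mathcal{O}_{\PP^3}(d)|_Q = \mathcal{O}_{Q}(d,d)$. Next, the $s$ specialized lines in the first ruling of $Q$ (these are the lines $L_1,\dots,L_s$ carrying multiplicities $m_1,\dots,m_s$, marked by the overline in the notation $\call_3(d;\overline{m_1,\dots,m_s},m^{\times p})$) are fibers of one projection $Q \to \PP^1$, so the trace of $V$ on $Q$ contains each such fiber with multiplicity $m_j$; subtracting them leaves a curve of bidegree $(d - \mu, d)$, where $\mu = \sum m_j$. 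The remaining $p$ very general lines in $\PP^3$ meet $Q$ in $2p$ very general points on $Q$, each imposing multiplicity $m$.

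Then I would invoke the classical birational map $Q \cong \PP^1 \times \PP^1 \dashrightarrow \PP^2$: blowing up one point of $Q$ and blowing down the two rulings through it realizes this, and under it a curve of bidegree $(a,b)$ on $Q$ (not passing through the blown-up point) becomes a plane curve of degree $a+b$. Applied to our curve of bidegree $(d-\mu, d)$, this yields degree $(d-\mu) + d = 2d - \mu$, matching the first entry of $W$. The two rulings contract to two points of $\PP^2$; tracking how the bidegree-$(d-\mu,d)$ curve meets a generic fiber in each ruling gives the multiplicities at these two points: a curve of bidegree $(a,b)$ meets a fiber of the first projection in $b$ points and a fiber of the second in $a$ points, so the images acquire multiplicities $d$ and $d-\mu$ respectively — exactly the entries $d$ and $d-\mu$ in $W$. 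The $2p$ very general points of $Q$ remain $2p$ very general points of $\PP^2$ (a general birational map does not spoil generality), with multiplicities $m$, giving the tail $m^{\times 2p}$. Finally, if $V$ is non-empty then its trace on $Q$ is a nonzero curve (here one uses that $Q$ is not a fixed component, so restriction to $Q$ is injective on $H^0$), hence so is its transform $W$, proving the last assertion.

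The main obstacle is bookkeeping the multiplicities correctly under the birational transformation — in particular being careful that the two contracted rulings are precisely the two rulings of $Q$, that the $s$ specialized lines lie in the ruling that gets contracted to the point of multiplicity $d-\mu$ (after subtracting them the bidegree in that factor drops from $d$ to $d-\mu$), and that the passage to $\PP^2$ neither introduces base points at the blown-up point (one chooses the blow-up center generically, off all the relevant points and lines) nor destroys the very general position of the $2p$ points. Once these incidences are set up correctly the degree and multiplicity count is the routine linear-algebra computation on $\operatorname{Pic}(\PP^1 \times \PP^1)$ sketched above, and no deeper input is needed.
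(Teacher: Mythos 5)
Your proof is correct and follows essentially the same route as the paper's sketch: restrict to $Q\cong\PP^1\times\PP^1$ to get a bidegree $(d,d)$ divisor, subtract the $s$ ruling lines to reach bidegree $(d-\mu,d)$ with $2p$ points of multiplicity $m$, and transport via the standard birational map to $\PP^2$ to obtain degree $2d-\mu$ and the two extra multiplicities $d$ and $d-\mu$. The only quibble is your parenthetical that restriction to $Q$ is ``injective on $H^0$'' --- it is not (sections divisible by the equation of $Q$ die), but all you need is that some member of $V$ does not contain $Q$, which is exactly the hypothesis that $Q$ is not a fixed component.
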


\begin{proof}
   The proof is classical and can be found in \cite[Proposition 15]{Dum10}, see also \cite{DVL07}. We present a sketch for reader's convenience.
   The quadric $Q$ is isomorphic to $\PP^1 \times \PP^1$.
   The restriction of a divisor of degree $d$ in $\PP^3$ to $Q$ (if $Q$ is not a component of this divisor) is a divisor $\Gamma$
   on $\PP^1 \times \PP^1$ of bidegree $(d,d)$. The $s$ lines with multiplicities $m_1,\dots,m_s$ are components of $\Gamma$.
   Subtracting them from $\Gamma$ we obtain a
   divisor $\Gamma'$ of bidegree $(d-\mu,d)$. The remaining $p$ very general lines intersect $Q$ in $2p$ points.
   The divisor $\Gamma'$ must vanish at these points to order at least $m$.
   It maps to $\PP^2$ to an effective divisor of degree $2d-\mu$, with two additional points
   of multiplicity $d-\mu$ and $d$, and $2p$ points with multiplicity $m$.
\end{proof}

\begin{lemma}\label{t0out}
   Let $\call_3(\delta;\overline{q_1,\dots,q_s},1^{\times p})$ be semi-effective.
   Let $t_0 = T(\delta;q_1,\dots,q_s;p)$. Then
$$\call_3(\delta-2t_0;\overline{q_1-t_0,\dots,q_s-t_0},1^{\times p})$$
is semi-effective.
\end{lemma}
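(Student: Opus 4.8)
The plan is to translate semi-effectivity, via Remark~\ref{goodm}, into the non-emptiness of an honest integral linear system, and then to play off the fixed quadric $Q$: either $Q$ lies in the base locus with large enough multiplicity, in which case we peel it off, or it does not, in which case restricting to $Q$ produces --- up to scaling --- the system \eqref{syst} at a parameter value below $t_0$, contradicting Lemma~\ref{tT}. Concretely, if $t_0=0$ there is nothing to prove, so assume $t_0>0$, and using Remark~\ref{goodm} fix an integer $m$ divisible by the denominators of $t_0,\delta,q_1,\dots,q_s$ such that $\call_3(m\delta;\overline{mq_1,\dots,mq_s},m^{\times p})$ is integral and non-empty. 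Let $e$ be the fixed multiplicity of $Q$ in this system, i.e.\ the least value of $\operatorname{mult}_Q D$ over its members; then a general member $D$ has $\operatorname{mult}_Q D=e$, the residual system $\call_3(m\delta-2e;\overline{mq_1-e,\dots,mq_s-e},m^{\times p})$ is non-empty, and $Q$ is not a fixed component of it (otherwise a general $D$ would contain $(e+1)Q$).

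Next I would rule out the case $e<mt_0$. Applying Lemma~\ref{trace} to the residual system, and writing $\mu=\sum_j(mq_j-e)=mq-se$ with $q=\sum_j q_j$, its trace on $Q$ is, viewed on $\PP^2$, the non-empty system $\call_2\big(2(m\delta-2e)-\mu;\,m\delta-2e,\,m\delta-2e-\mu,\,m^{\times 2p}\big)$. A direct computation identifies this with $m$ times the system \eqref{syst} attached to $(\delta;q_1,\dots,q_s;p)$ and evaluated at $t=e/m$; hence \eqref{syst} is semi-effective at $t=e/m$. Since $0\le e/m<t_0$, this contradicts Lemma~\ref{tT}, so in fact $e\ge mt_0$.

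Finally, since $e\ge mt_0$ and $mt_0\in\ZZ$, the divisor $D-mt_0\,Q$ is effective. It has degree $m\delta-2mt_0=m(\delta-2t_0)$; along each of the $s$ specialized lines, which lie in one ruling of $Q$, its multiplicity is at least $mq_j-mt_0=m(q_j-t_0)$; and along each of the $p$ very general lines, which meet $Q$ only in isolated points, its multiplicity is still at least $m$. Hence $\call_3\big(m(\delta-2t_0);\overline{m(q_1-t_0),\dots,m(q_s-t_0)},m^{\times p}\big)$ is non-empty, which by Definition~\ref{def:stably empty semi-eff} is exactly the assertion that $\call_3(\delta-2t_0;\overline{q_1-t_0,\dots,q_s-t_0},1^{\times p})$ is semi-effective.

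The delicate step is the second one: the routine but error-prone bookkeeping identifying the trace system of Lemma~\ref{trace} with the $m$-fold multiple of \eqref{syst} at $t=e/m$, together with the need to apply the dictionary ``semi-effective $\Leftrightarrow$ some integral system is non-empty'' (Remark~\ref{goodm}, Definition~\ref{def:stably empty semi-eff}) consistently throughout. One should also not forget that Lemma~\ref{trace} requires $Q$ not to be a fixed component of the system it is applied to, which is precisely why $e$ is taken to be the fixed multiplicity and $D$ a general member.
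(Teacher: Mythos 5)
Your proof is correct and follows essentially the same route as the paper: restrict the residual system to $Q$, identify its trace via Lemma \ref{trace} with the system \eqref{syst} evaluated at $t=e/m$, invoke Lemma \ref{tT} to force $e\ge mt_0$, and then peel off $mt_0\,Q$. The only point worth making explicit is that Lemma \ref{trace} needs the multiplicities $mq_j-e$ to be nonnegative; in your contradiction case $e<mt_0$ this follows from $t_0\le\min_j q_j$ (built into the definition of the output of $T$), a detail the paper handles by a short separate case.
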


\begin{proof}
Let $\call_3(m\delta;\overline{mq_1,\dots,mq_s},m^{\times p})$ be integral and non-empty. Without loss of generality
we may assume that $mt_0$ is integral.

   Assume that $Q$ is contained as a $k_0$-fold base component of this system, so that the residual system
   $$\call_3(m\delta-2k_0;\overline{mq_1-k_0,\dots,mq_s-k_0},m^{\times p})$$
is non-empty and $Q$ is not its base component.
We want to prove that $k_0 \geq mt_0$.

If $k_0$ is greater than or equal to the minimum of $mq_1,\dots,mq_s$, then we are done, since $t_0 \leq \min\{q_1,\dots,q_s\}$.
In the opposite case the multiplicities $mq_j-k_0$ are nonnegative.

Let $q = \sum_{j=1}^{s} q_j$. By Lemma \ref{trace} the residual system restricted to $Q$
and transferred to $\PP^2$ gives a non-empty system
$$\call_2(2m\delta-mq+(s-4)k_0;m\delta-2k_0,m\delta-mq+(s-2)k_0,m^{\times 2p}).$$
Dividing by $m$, for $t=k_0/m$ we obtain a semi-effective system on $\PP^2$
$$\call_2(2\delta-q+(s-4)t;\delta-2t,\delta-q+(s-2)t,1^{\times 2p}).$$
Since $t_0$ is the outcome of $T$, Lemma \ref{tT} implies that $t \geq t_0$. Thus $k_0 \geq mt_0$.

   It follows that the system $\call_3(m\delta;\overline{mq_1,\dots,mq_s},m^{\times p})$
   contains $Q$ as a base component with multiplicity at least $mt_0$.
   Subtracting this base component, we get
   the non-empty system $\call_3(m(\delta-2t_0);\overline{m(q_1-t_0),\dots,m(q_s-t_0)},m^{\times p})$.
   This proves the assertion.
\end{proof}

   We now define our second algorithm, Algorithm $L$.
   It works with sequences $(\delta;\overline{q_1,\dots,q_s},1^{\times p})$ of rational numbers $\delta,q_1,\dots,q_s$ and an integer $p$.
   During the procedure, these numbers will be altered. As before, we fix a small $\tau > 0$.
\begin{procedure}[Algorithm $L$]\label{proc:L}\rm $ $
\begin{itemize}[leftmargin=0.5cm]
   \item Step 1. Check if $\delta < 1$ and $p\geq 1$; or $\delta < q_j$ for some $j$. If so, return ``yes'' and finish.
   \item Step 2. Run Algorithm $T$ to get $t_0 = T(\delta;q_1,\dots,q_s;p)$. If $t_0\geq \tau$ define the new sequence
      $(\delta-2t_0;\overline{q_1-t_0,\dots,q_s-t_0},1^{\times p})$ and go to Step 1.
   \item Step 3. If $p>0$ then define the new sequence $(\delta;\overline{q_1,\dots,q_s,1},1^{\times (p-1)})$ and go to Step 1.
   \item Step 4. Answer ``no''.
\end{itemize}
\end{procedure}
Observe that the algorithm must terminate, since in Step 2 the number $\delta$ decreases by at least $2\tau$ (and $\delta < 0$ certainly
finishes Algorithm $L$), and in Step 3 the number $p$ decreases by 1 ($p=0$ also finishes the algorithm).

The following example illustrates Algorithm L for input data $(4;1^{\times 8})$.
\begin{example}
Let $\tau=1/1000$. In each line we write a system at the beginning of Step 1 and $t_0$ given by Algorithm T.
$$\begin{array}{rll}
(4;&1^{\times 8})&\\
(4;&\overline{1},1^{\times 7})&  t_0=0\\
(4;&\overline{1,1},1^{\times 6})&  t_0=0\\
(4;&\overline{1,1,1},1^{\times 5})&  t_0=4/7\\
(20/7;&\overline{3/7,3/7,3/7},1^{\times 5})&   t_0=3/14\\
(17/7;&\overline{3/14,3/14,3/14},1^{\times 5})&  t_0=27/224\\
(35/16;&\overline{3/32,3/32,3/32},1^{\times 5})&  t_0=135/2464\\
(160/77;&\overline{3/77,3/77,3/77},1^{\times 5})&   t_0=115/4928\\
(65/32;&\overline{1/64,1/64,1/64},1^{\times 5})&   t_0=49/5184\\
(163/81;&\overline{1/162,1/162,1/162},1^{\times 5})&   t_0=751/200394\\
(2480/1237;&\overline{3/1237,3/1237,3/1237},1^{\times 5})&   t_0=11424/6240665\\
(10096/5045;&\overline{3/5045,3/5045,3/5045},1^{\times 5})&   t_0=0\\
(10096/5045;&\overline{3/5045,3/5045,3/5045,1},1^{\times 4})&   t_0=3/5045\\
(2;&\overline{5042/5045},1^{\times 4})&   t_0=5042/5045\\
(6/5045;&1^{\times 4})&
\end{array}$$
Answer "yes".
\end{example}

\begin{lemma}
If Algorithm $L$ performed on $(\delta;1^{\times s})$ returns ''yes'', then
   $$\alphahat(s)\geq \delta.$$
\end{lemma}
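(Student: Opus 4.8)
The plan is to argue by contrapositive on the key invariant, namely to show that the ``yes'' output certifies stable emptiness of $\call_3(\delta;1^{\times s})$, which by Lemma \ref{Wald} forces $\delta \le \alphahat(s)$. First I would observe that the starting sequence $(\delta;1^{\times s})$ corresponds (in the notation of Lemma \ref{t0out}) to the degenerate configuration with no lines on the quadric, which is harmless since $\call_3(\delta;\overline{\,},1^{\times s})$ has the same semi-effectivity as $\call_3(\delta;1^{\times s})$. So I would assume for contradiction that $\call_3(\delta;1^{\times s})$ is semi-effective and track a non-empty integral multiple through the algorithm.

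The heart of the argument is an invariant: at the beginning of every pass through Step 1, the current sequence $(\delta';\overline{q_1,\dots,q_r},1^{\times p})$ has the property that $\call_3(\delta';\overline{q_1,\dots,q_r},1^{\times p})$ is semi-effective. This is preserved by each step. In Step 2, Lemma \ref{t0out} (which internally uses Lemmas \ref{trace}, \ref{tT}, \ref{3op}, and the semicontinuity yoga together with the Nagata Conjecture for squares via Lemma \ref{3op}) says precisely that subtracting $t_0$ copies of the quadric $Q$ from the ruling-specialized lines keeps the system semi-effective; that is exactly the new sequence $(\delta'-2t_0;\overline{q_1-t_0,\dots,q_r-t_0},1^{\times p})$. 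In Step 3, moving one very general line onto the ruling of $Q$ is a specialization of a very general configuration to a less general one, so if the system with the line in very general position is semi-effective then so is the one with it on the quadric — this is again semicontinuity of $h^0$, and it matches the sequence $(\delta';\overline{q_1,\dots,q_r,1},1^{\times(p-1)})$. Thus the invariant survives to termination.

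Finally I would close the loop by analyzing the three ways the algorithm can finish. If it answers ``yes'' via Step 1, the terminal sequence satisfies either $\delta' < 1$ with $p \ge 1$, or $\delta' < q_j$ for some $j$; in the first case a non-empty integral $\call_3(m\delta';\dots,m^{\times p})$ would have a very general line of multiplicity $m$ but degree $< m$, impossible by Bezout against that line; in the second case a line on the quadric ruling carries multiplicity $m q_j$ while the degree is $m\delta' < m q_j$, so that line (a line in $\PP^3$) must be a component, but a line in a ruling of $Q$ meets any effective divisor not containing $Q$ along $Q$ in a bounded way — more cleanly, the trace argument of Lemma \ref{trace} reduces to a plane system whose degree is below a single multiplicity, again impossible. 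Either way the assumed non-empty system cannot exist, contradicting semi-effectivity, so $\call_3(\delta;1^{\times s})$ is stably empty and $\delta \le \alphahat(s)$; combined with the fact that we only ever \emph{decrease} $\delta$ along the run, the original input bound holds. The main obstacle I anticipate is making the Step 1 termination conditions genuinely contradict semi-effectivity with full rigor — in particular handling the ``$\delta < q_j$'' case, where one must be careful that the offending line really does force $Q$ (or itself) to split off and that this contradicts the residual degree being nonnegative; the $\delta<1$, $p\ge1$ case is the easy one and essentially records that a very general line cannot sit on a hypersurface of degree $<1$.
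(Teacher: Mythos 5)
Your proposal is correct and follows essentially the same route as the paper: assume semi-effectivity of the input, show Steps 2 and 3 preserve semi-effectivity (via Lemma \ref{t0out} and semicontinuity under specialization of a line to $Q$, respectively), and derive a contradiction from the terminal ``yes'' condition because a non-empty divisor cannot have degree smaller than its vanishing order along a line. The only superfluous element is the closing remark about $\delta$ decreasing along the run, which is not needed once the contradiction with the initial semi-effectivity is in hand.
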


\begin{proof}
   Assume to the contrary that $\alphahat(s) < \delta$. By Lemma \ref{Wald}, $(\delta;1^{\times s})$ is semi-effective.
   We run algorithm $L$ on this sequence.
   The Steps 2 and 3 transform semi-effective
   sequences into semi-effective sequences. For Step 2 we use Lemma \ref{t0out}, for step 3 observe that if a system with a line
   in a very general position is non-empty, then it is also non-empty for this line specialized to $Q$.

   By our assumption, the Algorithm $L$ finishes with ''yes''. This means that the system
   $\call_3(\tilde\delta;\overline{q_1,\dots,q_s},1^{\times p})$ with $\tilde\delta < 1$ and $p \geq 1$, or $\tilde\delta < q_j$ is semi-effective.
   This is a contradiction, since a nonempty system cannot have a degree strictly lower than the order of its vanishing along a line.
\end{proof}
   We use now our considerations in this section to prove Theorem \ref{thm:approach2}.
\subsection{Proof of Theorem \ref{thm:approach2}}
   For $s$ large enough, Theorem \ref{thm:approach2} follows from Theorem \ref{thm:approach2alg}.
   Indeed, let
   $q:= \lfloor\sqrt{2.5s}\rfloor$ and $k:=\lfloor\sqrt{0.4s}\rfloor$.
   Then $qk\leq s$ holds obviously. For the second condition in Theorem \ref{thm:approach2alg}
   we use the stronger inequality
   $$(\sqrt{2.5s}-\sqrt{0.4s}+1)^2\leq s-\sqrt{0.4s},$$
   which holds for $s\geq 490$. This can be checked elementarily.

   For lower values of $s$ we use computer to run Procedure \ref{proc:L} with $\delta=\lfloor\sqrt{2.5s}\rfloor$.
   It verifies the assertion for all values of $s$ except $4,7$ and $10$. Since for $s=4$ we have
   $\alphahat(4)=8/3<\sqrt{2.5\cdot 4}$, the assertion cannot hold. For $s=7$ the situation is more
   complicated. We have $e_7\simeq 4.203503$ and $\sqrt{2.5\cdot 7}\simeq 4.1833$, so that the
   assertion might hold. In fact, it is expected that $\alphahat(7)=4.2$. Our algorithm returns only $3.837$ as the
   lower bound in this case. For $s=10$ we have
   $e_{10}\simeq 5.107249$, whereas $\sqrt{2.5\cdot 10}=5$, so the assertion might hold, but its
   proof would require some more refined methods, since our algorithm returns only $4.807$ in this case.

\section{A Chudnovsky-type result}
   In this section we derive Theorem \ref{thm:chud} from lower bounds on $\alphahat(s)$.
\begin{lemma}
\label{assq}
Let $a$, $s$ be integers satisfying $a\geq 10$ and $(a+2)(a+1)\leq 6s$. Then
\begin{equation}\label{eqc1}
\sqrt{2s-1}-1 \geq \frac{a+1}{2}.
\end{equation}
\end{lemma}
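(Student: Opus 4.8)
The plan is to reduce the inequality \eqref{eqc1} to a single quadratic inequality in $a$. Since $a\geq 10$, the right-hand side of \eqref{eqc1} is positive, and since the hypothesis $(a+2)(a+1)\leq 6s$ forces $6s\geq 132$, the quantity $2s-1$ under the root is positive as well. Hence, adding $1$ to both sides and then squaring (both steps legitimate by these positivity remarks), \eqref{eqc1} is equivalent to
\[
  8s \;\geq\; a^2 + 6a + 13 .
\]
So it suffices to show that the hypothesis on $s$ is strong enough to guarantee this.

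First I would use $(a+2)(a+1)\leq 6s$ in the form $s\geq \frac{1}{6}(a^2+3a+2)$, which gives $8s\geq \frac{4}{3}(a^2+3a+2)$. It is therefore enough to check that
\[
  \frac{4}{3}\bigl(a^2+3a+2\bigr) \;\geq\; a^2 + 6a + 13 ,
\]
which, after multiplying by $3$ and collecting terms, is exactly $a^2 - 6a - 31 \geq 0$.

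Finally, the quadratic $g(a)=a^2-6a-31$ is increasing for $a\geq 3$ and satisfies $g(10)=9>0$; hence $g(a)\geq 0$ for every integer $a\geq 10$, and running the chain of equivalences backwards yields \eqref{eqc1}. There is no real obstacle here: the argument is a routine estimate, and the only point that deserves a word of care is the legitimacy of the squaring step, which is exactly what the two positivity observations at the outset provide (note, incidentally, that integrality of $s$ is never used — only the inequality $6s\geq (a+2)(a+1)$ enters the proof).
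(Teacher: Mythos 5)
Your proof is correct and follows essentially the same route as the paper: reduce \eqref{eqc1} to $8s\geq a^2+6a+13$ by squaring, then use $8s\geq \frac{4}{3}(a+1)(a+2)$ and check the resulting quadratic inequality for $a\geq 10$. The only difference is that you spell out the positivity justifications for squaring and the explicit quadratic $a^2-6a-31\geq 0$, which the paper leaves implicit.
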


\begin{proof}
After elementary operations we get
the equivalent inequality
$$8s \geq a^2+6a+13.$$
Since, by assumption, $8s \geq 4/3(a+1)(a+2)$, it is enough to show that
$$\frac{4}{3}(a+1)(a+2) \geq a^2+6a+13,$$
which holds for $a\geq 10$.
\end{proof}

Finally we prove Theorem \ref{thm:chud}.
\subsection{Proof of Theorem \ref{thm:chud}}
   Since there exists no divisor of degree $\alpha(s)-1$ vanishing along $s$ very general lines, counting conditions we see that
   it must be (cf. \cite[Lemma 2.1]{DHST14}).
\begin{equation}
\label{eqal}
\binom{(\alpha(s)-1)+3}{3} \leq s((\alpha(s)-1)+1).
\end{equation}
   This is equivalent to $(\alpha(s)+2)(\alpha(s)+1) \leq 6s$. Now, by Theorem \ref{thm:approach1} and Lemma \ref{assq}
   $$\alphahat(s) \geq \lfloor \sqrt{2s-1} \rfloor \geq \sqrt{2s-1}-1 \geq \frac{\alpha(s)+1}{2}$$
   for $\alpha(s) \geq 10$. Hence for $s\geq 22$ we are done.
   For $s=1,3,4,\dots,21$ we compare $\lfloor \sqrt{2s-1} \rfloor$ with $\frac{\alpha(s)+1}{2}$ for $\alpha(s)$ satisfying \eqref{eqal}
to get the result. For $s=2$ we get the bound $\alphahat(s) \geq 2$ by Theorem \ref{thm:approach1alg} ($k=1$) although
$\lfloor \sqrt{2\cdot 2-1} \rfloor = 1$.

\section{The limits of the method}
   As already mentioned the upper bound $\alphahat(s)\leq e_s$ has been proved in \cite[Theorem 2.5]{DHST14}
   and it is conjectured in \cite[Conjecture A]{DHST14} that equality holds for $s$ sufficiently large.
   In the present note, we specialize the lines so that there arise intersection points between them.
   It has been discussed in \cite[Example 20]{ahp1} and generalized in \cite[Example 5]{ahp2} that in case
   of intersecting lines there is a correction term in the coefficients of $\Lambda_s$. We have
   $\Lambda_2(t)=t^3-6t+4$ for a pair of skew lines, whereas for a pair of intersecting lines
   we have $\Lambda_{2,1}(t)=t^3-6t+6$. We omit a technical and not interesting here proof of the
   fact that for $s$ lines with $k$ simple intersection points (at most two lines meet in a point) we have
   $$\Lambda_{s,k}(t)= t^3-3st+2s+2k.$$
   It is expected, see \cite[Conjecture 13]{ahp2}, that also in this case, for $s$ sufficiently large,
   the Waldschmidt constant of the arrangement of $s$ lines with $k$ simple intersection points is
   equal to the largest real root of the asymptotic Hilbert polynomial $\Lambda_{s,k}(t)$. As this
   root is slightly smaller than the root of $\Lambda_s(t)$, our method can never prove that
   $\alphahat(s)=e_s$. However the bound we get is very close and thus of interest.
\begin{example}
   Let $s=100$. By Theorem \ref{thm:approach2alg} with $k=6$ we get $\alphahat(100)\geq 15$. The specialization we have made
   (putting lines onto $15$ planes, $6$ lines on each plane) generates $225$ simple intersection points.
   The Waldschmidt constant for this configuration cannot exceed $16.114$, the largest root of $\Lambda_{100,225}(t)=t^3-300t+650$,
   whereas the largest root of $\Lambda_{100}(t)=t^3-300t+200$ is $16.977$.
\end{example}

\paragraph*{Acknowledgement.}
   This research has been carried out while the second author was visiting
   as a senior graduate student the Department of Mathematics of the Pedagogical University of Cracow.
   The first, the third and the fourth authors were partially supported by
   National Science Centre, Poland, grant 2014/15/B/ST1/02197.
   We thank Tomasz Szemberg for helpful remarks.

\bibliographystyle{abbrv}
\bibliography{master}

\begin{thebibliography}{10}

\bibitem{recent}
T.~Bauer, C.~Bocci, S.~Cooper, S.~Di~Rocco, M.~Dumnicki, B.~Harbourne,
  K.~Jabbusch, A.~L. Knutsen, A.~K\"uronya, R.~Miranda, J.~Ro\'e, H.~Schenck,
  T.~Szemberg, and Z.~Teitler.
\newblock Recent developments and open problems in linear series.
\newblock In {\em Contributions to algebraic geometry}, EMS Ser. Congr. Rep.,
  pages 93--140. Eur. Math. Soc., Z\"urich, 2012.

\bibitem{BocHar10JAG}
C.~Bocci and B.~Harbourne.
\newblock Comparing powers and symbolic powers of ideals.
\newblock {\em J. Algebraic Geom.}, 19(3):399--417, 2010.

\bibitem{Chu81}
G.~V. Chudnovsky.
\newblock Singular points on complex hypersurfaces and multidimensional
  {S}chwarz lemma.
\newblock In {\em Seminar on {N}umber {T}heory, {P}aris 1979--80}, volume~12 of
  {\em Progr. Math.}, pages 29--69. Birkh\"auser, Boston, Mass., 1981.

\bibitem{DDGH}
H.~Dao, A.~De~Stefani, E.~Grifo, C.~Huneke, and L.~N\'u\~{n}ez Betancourt.
\newblock Symbolic powers of ideals, 2017.

\bibitem{DVL07}
C.~De~Volder and A.~Laface.
\newblock On linear systems of {$\Bbb P^3$} through multiple points.
\newblock {\em J. Algebra}, 310(1):207--217, 2007.

\bibitem{Dum09}
M.~Dumnicki.
\newblock An algorithm to bound the regularity and nonemptiness of linear
  systems in {$\Bbb P^n$}.
\newblock {\em J. Symbolic Comput.}, 44(10):1448--1462, 2009.

\bibitem{Dum10}
M.~Dumnicki.
\newblock Special homogeneous linear systems on {H}irzebruch surfaces.
\newblock {\em Geom. Dedicata}, 147:283--311, 2010.

\bibitem{Dum15}
M.~Dumnicki.
\newblock Containments of symbolic powers of ideals of generic points in {$\mathbb  P^3$}.
\newblock {\em Proc. Amer. Math. Soc.}, 143(2):513--530, 2015.

\bibitem{ahp2}
M.~Dumnicki, {\L }.~Farnik, and H.~Tutaj-Gasi\'nska.
\newblock Asymptotic {H}ilbert polynomial and a bound for {W}aldschmidt
  constants.
\newblock {\em Electron. Res. Announc. Math. Sci.}, 23:8--18, 2016.

\bibitem{DHST14}
M.~Dumnicki, B.~Harbourne, T.~Szemberg, and H.~Tutaj-Gasi\'nska.
\newblock Linear subspaces, symbolic powers and {N}agata type conjectures.
\newblock {\em Adv. Math.}, 252:471--491, 2014.

\bibitem{DSS18}
M.~Dumnicki, T.~Szemberg, and J.~Szpond.
\newblock Local effectivity in projective spaces.
\newblock {\em  arXiv:1802.08699}.

\bibitem{ahp1}
M.~Dumnicki, J.~Szpond, and H.~Tutaj-Gasi\'nska.
\newblock Asymptotic {H}ilbert polynomials and limiting shapes.
\newblock {\em J. Pure Appl. Algebra}, 219(10):4446--4457, 2015.

\bibitem{DumTut17}
M.~Dumnicki and H.~Tutaj-Gasi\'nska.
\newblock A containment result in {$P^n$} and the {C}hudnovsky conjecture.
\newblock {\em Proc. Amer. Math. Soc.}, 145(9):3689--3694, 2017.

\bibitem{ELS01}
L.~Ein, R.~Lazarsfeld, and K.~E. Smith.
\newblock Uniform bounds and symbolic powers on smooth varieties.
\newblock {\em Invent. Math.}, 144(2):241--252, 2001.

\bibitem{FMX18}
L.~Fouli, P.~Mantero, and Y.~Xie.
\newblock Chudnovsky's conjecture for very general points in $\mathbb{P}^n_k$.
\newblock {\em J. Algebra}, in press, 2018.

\bibitem{GHV13}
E.~Guardo, B.~Harbourne, and A.~Van~Tuyl.
\newblock Asymptotic resurgences for ideals of positive dimensional subschemes
  of projective space.
\newblock {\em Adv. Math.}, 246:114--127, 2013.

\bibitem{HocHun02}
M.~Hochster and C.~Huneke.
\newblock Comparison of symbolic and ordinary powers of ideals.
\newblock {\em Invent. Math.}, 147(2):349--369, 2002.

\bibitem{Jan15}
M.~Janssen.
\newblock On the fattening of lines in {${\mathbb P}^3$}.
\newblock {\em J. Pure Appl. Algebra}, 219(4):1055--1061, 2015.

\bibitem{SzeSzp17}
T.~Szemberg and J.~Szpond.
\newblock On the containment problem.
\newblock {\em Rend. Circ. Mat. Palermo (2)}, 66(2):233--245, 2017.

\bibitem{newae}
\newblock Algorithms bounding Waldschmidt constants in ${\mathbb P}^3$,
\newblock {http://szpond.up.krakow.pl/boundsforlinesinP3}
\end{thebibliography}

\bigskip \small

\bigskip
Marcin Dumnicki, Halszka Tutaj-Gasi\'nska,
Jagiellonian University, Faculty of Mathematics and Computer Science, {\L}ojasiewicza 6, PL-30-348 Krak\'ow, Poland

\nopagebreak
\textit{E-mail address:} \texttt{Marcin.Dumnicki@uj.edu.pl}
\textit{E-mail address:} \texttt{Halszka.Tutaj-Gasinska@uj.edu.pl}

\bigskip
   Mohammad Zaman Fashami,
   Faculty of Mathematics, K. N. Toosi University of Technology, Tehran, Iran.

\nopagebreak
   \textit{E-mail address:} \texttt{zamanfashami65@yahoo.com}

\bigskip
   Justyna Szpond,
   Department of Mathematics, Pedagogical University of Cracow,
   Podchor\c a\.zych 2,
   PL-30-084 Krak\'ow, Poland

\nopagebreak
   \textit{E-mail address:} \texttt{szpond@gmail.com}

\end{document}